\newcommand{\etal}{{\it et al.}}
\newtheorem{theorem}{\textbf{Theorem}}[section]
\newtheorem{lemma}[theorem]{\textbf{Lemma}}
\newtheorem{example}[theorem]{\textbf{Example}}
\newtheorem{definition}[theorem]{\textbf{\textbf{Definition}}}
\newtheorem{remark}[theorem]{\textbf{Remark}}
\def\eqref#1{Equation (\ref{#1})}
\def\thmref#1{Theorem \ref{#1}}
\def\defref#1{Definition \ref{#1}}
\def\exref#1{Example \ref{#1}}
\def\figref#1{Figure~\ref{#1}}
\def\gE{{\mathcal{E}}}
\def\gF{{\mathcal{F}}}
\def\gG{{\mathcal{G}}}
\def\gL{{\mathcal{L}}}
\def\gN{{\mathcal{N}}}
\def\gP{{\mathcal{P}}}
\def\gV{{\mathcal{V}}}
\def\sP{{\mathbb{P}}}
\def\sR{{\mathbb{R}}}
\def\vw{{\bm{w}}}
\DeclareMathOperator*{\lipv}{\text{Lip}_1(\gV)}
\newcommand{\lipxv}[1]{\text{Lip}_{1,#1}(\gV)}
\newcommand{\vol}{\text{Vol}}
\newcommand{\diam}{\text{diam}}
\title{Ricci Flow on Weighted Digraphs with Balancing Factor}
\author[1]{Shuliang Bai}
\author[2]{Rui Li}
\author[2]{Shuang Liu} 
\author[1]{Xin Lai\footnote{Corresponding author: laixin@bimsa.cn}}
\affil[1]{{\small Beijing Key Laboratory of Topological Statistics and Applications for Complex Systems, Beijing Institute of Mathematical Sciences and Applications, Beijing, 101408, China.} }
\affil[2]{{\small School of Mathematics, Remin University of China, Beijing, 100872, China}}
\begin{document}
\maketitle
\begin{abstract}
Ricci curvature and Ricci flow have proven to be powerful tools for analyzing the geometry of discrete structures, particularly on undirected graphs, where they have been applied to tasks ranging from community detection to graph representation learning. 
However, their development on directed graphs remains limited, with Ricci flow being especially underexplored. 
In this work, we introduce a rigorous formulation of Ricci flow on directed weighted graphs, which evolves edge weights while preserving distances, and establish both the existence and uniqueness of its solutions.
To capture the essence of asymmetry in directed networks and to enhance the capability of modeling more flexible structures, we incorporate a node-wise balancing factor that regulates between outflow and inflow. 
Building on the continuous Ricci flow evolution framework, we propose a discrete Ricci flow algorithm that is applicable to numerical computing. Numerical studies on various  directed graph examples demonstrate the capacity of the proposed flow to reveal structural asymmetry and dynamic evolutions.
\end{abstract}


\section{Introduction}
Graphs are pervasive in representing data structures in areas such as Internet of Things (IoT)~\cite{Li_2025_topology, Li2023GraphpoweredLM}, biology~\cite{Yao_2024_Molecular}, social network~\cite{Wang_2023_Minority}, supply chain analysis~\cite{Xiang_2025_graph}, etc.
Directed graphs in particular capture causal/preferential/directional relationships between entities. 
In recent years, discrete Ricci curvature has emerged as a powerful tool to analyze network structure. 

Ricci curvature and Ricci flow are fundamental notions in Riemannian geometry, playing a significant role in understanding manifold geometry. 
Introduced by Hamilton~\cite{Hamilton_1982_three}, the Ricci flow evolves a Riemannian metric $g(t)$ using Ricci curvature $\mathrm{Ric}(g)$ according to
$$
\frac{\partial g}{\partial t} = -2 \,\mathrm{Ric}(g),
$$
progressively smoothing metric irregularities and driving the manifold toward canonical geometries. 
This technique led to significant breakthroughs, most notably Perelman's resolution of the Poincar\`e conjecture and Thurston's geometrization conjecture \cite{Perelman_2002_entropy, Perelman_2003_Ricci}. 
Following these landmark results, Ricci flow has become a central tool in the study geometric evolution.

Motivated by these developments, researchers have sought discrete analogues of Ricci curvature and Ricci flow on graphs. 
Several notions of discrete Ricci curvature have been developed for graphs. 
Among the most widely studied are Ollivier's Ricci curvature~\cite{Ollivier_2009_ricci} on metric spaces using optimal transportation, Lin--Lu--Yau (LLY) curvature~\cite{Lin_2011_Ricci} on graphs using a limit form of Ollivier's Ricci curvature that is better adapted to graphs, and Forman Ricci curvature~\cite{Forman_2003_Bochner, Sreejith_2016_Forman} on graphs using combinatorial approaches. 
Each of them has solid theoretical foundations and computational properties.
Ollivier and LLY curvatures maintain a strong connection to the continuous theory but require computing Wasserstein distances, which can be expensive for large networks. Forman curvature offers simplicity and scalability but lacks some geometric richness.
Building on these definitions of Ricci curvature, discrete Ricci flows have been proposed for theoretical studies and applications in network analysis, including network robustness, community detection, bottleneck identification~\cite{Tan_2024_analyzing, Lai2023Deeper, alon2021on}. 
However, most existing work has focused on \emph{undirected graphs}, where symmetric interactions simplify the underlying structure. 
The study of Ricci flow on \emph{directed graphs}, which inherently exhibit asymmetry and directional dependence, remains largely unexplored. This gap motivates this work to adapt the Ricci flow models to directed and weighted networks, capable of capturing their anisotropic geometry.

Directed weighted graphs are ubiquitous in modeling real-world systems. 
They exhibit inherent \emph{asymmetry} that significantly influences network structure and dynamics. 
Recent surveys on machine learning for directed graphs highlight that geometric approaches remain far less developed than their counterparts for undirected graphs \cite{Sun_2024_towards}. 
This scarcity hinders the practical effectiveness for inherently directed data, such as road networks for traffic forecasting and control \cite{Feng_2024_traffic, Pung_2022_Aroad}, gene regulatory networks for directional biochemical reaction control \cite{Wei_2024_inference}, and social network for information spreading analysis \cite{Wang_2024_information}.  
Simply adapting directed data to algorithms which are designed for undirected data is critical because many complex systems cannot be accurately represented without accounting for directional dependencies and imbalanced interactions. 
Incorporating directionality into Ricci curvature and Ricci flow algorithms is therefore essential for understanding anisotropic geometry, improving network robustness, and guiding structural optimization.

Furthermore, since the LLY's Ricci curvature on graphs is based on the optimal transportation between probability distributions, to capture directional heterogeneity, we introduce a node-wise \emph{balancing factor} $\beta$, which regulates the ratio of outflow to inflow at each node. 
This mechanism provides a richer representation of flow asymmetry and facilitates the directed Ricci flow adapting to both directionality and local structural imbalance.

Another notable limitation of most existing Ricci flow formulations on graphs is that they do not distinguish between \emph{edge weight}  and the \emph{distance metric} of the network. 
For instance, in the traditional Ollivier Ricci flow~\cite{bai_Ollivier_2024}, edge weights are updated by formulas such as
$$
\frac{d}{dt}w_{e}(t) = -\kappa_{e}(t)\,w_{e}(t), \quad
\text{or} \quad
w_{xy}^{(i+1)} = \bigl(1 - \epsilon\,\kappa_{xy}^{(i)}\bigr)\,d^{i}(x,y),
$$
and other variants~\cite{Ni_2019_Community, Lai2022Normalized, Ma_2025_modified}, implying that distances evolve alongside weights, coupling the network’s distance-related geometric structure with its weight-related dynamic properties. 
Although such coupling has proven effective for tasks like community detection, it becomes problematic in scenarios where physical distances, such as road lengths, are inherently fixed, while weights (such as road width or capacity) evolve along Ricci flow process. 
These formulations conflate the dual role of edge weights, i.e., the connection strength and metric distance, obscuring the distinct functions of them.

In this work, we introduce a Ricci flow that decouples edge weight from the metric distance, allowing the weights to evolve while maintaining the static distance metric.
This design better reflects operational dynamics in real-world systems, such as traffic networks \cite{Gao_2019_measuring}, communication systems where latency remains constant while bandwidth evolves, and transport or logistics networks where geography is immutable but flow conditions change. 
In a nutshell, this separation will enable more applicable scenarios for further studies.

The main contributions of this work are threefold. 
Firstly, we extend the concept of Ricci flow to \emph{weighted directed graphs} and propose a new Ricci flow formulation that \emph{preserves the underlying distance metric while evolving edge weights}, decoupling distance-related geometry from weight-related dynamics.
Secondly, we introduce a node-wise \emph{balancing factor} $\beta$, which flexibly regulates the bias between outflow- and inflow-based transport distributions, enabling the model to capture local directional properties and flow asymmetry. 
Lastly, we prove the existence and uniqueness of the solution to the proposed Ricci flow framework for weighted directed graphs, providing a rigorous theoretical foundation for Ricci flow evolution in weighted directed networks. 

The remainder of this paper is structured as follows. Section \ref{sec:preliminaries} reviews definitions and properties of directed graphs. Section \ref{sec:directed_curvature} introduces the directed LLY's Ricci curvature with a balancing factor. Section \ref{sec:directed_flow} formulates the directed Ricci flow and proves the existence and uniqueness of its solution. Section \ref{sec:discrete_flow} gives the discrete time version  of our Ricci flow formulation, making it applicable to computational network analysis. Section \ref{sec:exam_app} presents illustrative examples and simulations. Finally, Section \ref{sec:conclusion} concludes and outlines future directions.

\section{Preliminaries}\label{sec:preliminaries}

A directed graph (digraph) $\gG = (\gV, \gE)$ consists of a finite vertex set $\gV$ and a set of directed edges $\gE\subseteq \gV\times \gV$, and $n=|\gV|$, $m=|\gE|$.
Different from the edges in undirected graphs, a directed edge is an ordered pair of nodes, denoted as $(x,y)$ or $x\to y$, where $x$ is called the \emph{tail} and $y$ is called the \emph{head} of the edge. 
The digraphs in this work are simple, that is, $\gG$ has no loops ($x\to x$) nor multiple edges ($x\rightrightarrows y$). 
In this work, we set the edge weight and distance separately, which is more flexible and in line with more application scenarios. 
Set $w:\gE\to \sR^+$ as the edge weights, and $d:\gV\times\gV\to \sR^+$ as the distance function.
In this part, we give the definitions and fundamental settings.

\begin{definition}[Directed path]
For any $x,y\in \gV$, a directed path from $x$ to $y$ is a sequence of distinct vertices, denoted as $p:=v_0\to v_1\to\dots \to v_k$ such that $v_0=x, v_k=y$, and for each $i = 0, 1, \dots, k-1$, the directed edge $(v_i \to v_{i+1})\in \gE$. 
The length of the path is defined as 
$$l(p):=\sum_{i=0}^{k-1}d_{v_iv_{i+1}}.$$
The set of all directed paths connecting $x$ and $y$ is denoted as $\sP(x, y)$.
\end{definition}

\begin{definition}[Directed distance]
    For any $x,y\in \gV$, the distance between them on $\gG$ is the shortest length of all directed path connecting them, i.e.,
    $$d(x, y) := \inf_{p \in \sP(x, y)} l(p),$$
    If there is no directed path from $ x $ to $ y $, set $ d(x, y) = \infty $.
\end{definition}
Note that the directed distance $ d(x, y) $ satisfies the following properties:

\begin{itemize}
    \item \textbf{Non-negativity:} $d(x, y) \geq 0$ for all $ x, y \in \gV $.
    \item \textbf{Triangle inequality:} $d(x, z) \leq d(x, y) + d(y, z)$ for all $ x, y, z \in \gV $.
    \item \textbf{Asymmetry:} $d(x, y) $ not necessarily equals $ d(y, x)$ for $x, y \in \gV$.
\end{itemize}
The diameter of a directed graph is the length of the longest directed path, that is, 
$$\diam(\gG)=\sup_{x,y\in \gV}d(x,y).$$

A directed graph is said to be strongly connected if for any pair of vertices $x,y\in \gV$, there exists a directed path from $x$ to $y$ and vice versa, thus $\diam(\gG)<\infty$.
By ignoring the directions of edges, a directed graph becomes a related undirected graph. Weak connectivity of a directed graph refers to that the corresponding undirected graph is connected. 
In this work, we assume all directed graphs are strongly connected.

\begin{definition}[Neighborhood]
    If $x\to y$ is a directed edge in $\gG$, then $x$ is an in-neighbor of $y$, and $y$ is an out-neighbor of $x$. Thus the in-neighborhood of any vertex $x\in V$ is the vertices $x$ is pointed to, denoted as
    $$\gN^{in}(x):=\{z\in V\vert z\to x \in E\}.$$
    Similarly, the out-neighborhood indicates the set of all out-neighbors, denoted as
    $$\gN^{out}(x):=\{z\in V\vert x\to z \in E\}.$$
    The neighborhood of $x$ is $\gN(x):=\gN^{in}(x)\cup\gN^{out}(x)$.
\end{definition}
Let $\deg^{in}_x=|\gN^{in}(x)|$, $\deg^{out}_x=|\gN^{out}(x)|$ and $\deg_x=|\gN(x)|$ be the in-degree, out-degree and degree of $x$ respectively. If $\deg_x=0$, then $x\in\gV$ is called an isolated vertex.

\begin{definition}[$c$-Lipschitz function on directed graphs]
    Let $\gG=(\gV, \gE, w, d)$ be a weighted directed graph. 
    $d:\gV\times\gV\to \sR^+$ is a directed distance function. 
    For any real-valued function $f:\gV\to \sR$, $f$ is a $c$-Lipschitz function if and only if there exists a constant $c > 0$ such that for any pair of vertices $x,y\in\gV$, the following inequalities holds:
    $$f(y)-f(x)\leq cd(x,y).$$
\end{definition}
Specifically, if $f$ is a 1-Lipschitz function, that implies $f(y)-f(x)\leq d(x,y)$ for all $x,y\in\gV$. Let $\lipv := \{f:\gV\to \sR \vert f \text{ is 1-Lipschitz function}\}$. Note that $\lipv$ is not compact because we can drift the functions in it by a constant. Chose a base point $x_0\in \gV$, and let $\lipxv{x_0}=\{f\in \lipv\vert f(x_0)=0\}$. Then $\lipxv{x_0}$ is compact for 
\begin{enumerate}
    \item[(1)] \textbf{Uniform Boundedness}: $\vert f(x)\vert \leq \vert f(x) - f(x_0)\vert + \vert f(x_0) \vert \leq \max\{d(x_0, x), d(x,x_0)\} \leq \diam(\gG)$ for $\forall x\in\gV$. Thus $||f||_\infty \leq \diam(\gG)$.
    \item[(2)] \textbf{Closeness}: Let $\{f_n\}\subset \lipxv{x_0}$ and $f_n\to f$ as $n\to\infty$. Then $f(x_0)=\lim_{n\to\infty}f_n(x_0)=0$, and $\forall x,y\in\gV$, $f(y)-f(x)\leq \limsup_{n\to\infty}[f_n(y)-f_n(x)]\leq \limsup_{n\to\infty}d(x,y)=d(x,y)$. Thus $f\in \lipxv{x_0}$.
\end{enumerate}

\section{Ricci Curvature on Directed Graphs}\label{sec:directed_curvature}
\subsection{Probability Distribution and Laplacian}
The probability distribution on directed graph $\gG=(\gV,\gE)$ is a function $\mu:\gV\to [0,1]$ such that $\sum_{x\in \gV}\mu(x)=1$. For any $x\in \gV$ in a graph, it can interact with others via edge connections. The interaction can be illustrated by a well designed probability distribution. But for directed graph, the interaction is more complicated because of the in-/out-neighboring relationships. Next, we give our definition on directed graph based on the transition perspective.
\begin{definition}[Probability transition kernel on directed graphs]
Given a weighted directed graph $\gG=(\gV,\gE, w)$, for any $x\in \gV$, $\deg_x\neq 0$. If $\gN^{out}_x\neq\emptyset$, the out probability transition kernel $P:\gV\to \sR$ is defined as:
\begin{equation*}P(x,z) = 
\left\{
\begin{aligned}
&\frac{w_{xz}}{\sum_{u:x\to u}w_{xu}}, & x\to z;\\
&0, & otherwise.
\end{aligned}
\right.
\end{equation*}
Similarly, if $\gN^{in}_x\neq\emptyset$, the in probability transition kernel is defined as 
\begin{equation*}P^\prime(x,z) = 
\left\{
\begin{aligned}
&\frac{w_{zx}}{\sum_{u:u\to x}w_{ux}}, & z\to x;\\
&0, & otherwise.
\end{aligned}
\right.
\end{equation*}
Let $\beta: \gV\to [0, 1]$ be a flexible balance factor. The 
balancing probability transition kernel
is defined as:
\begin{equation}\label{eq:balance_ptk}
    \gP(x,z):=
    \left\{
    \begin{aligned}
        &P(x,z),        && \text{if } \gN^{in}_x = \emptyset \text{ and }\gN^{out}_x \neq \emptyset;  \\
        &P^\prime(x,z), && \text{if } \gN^{in}_x \neq \emptyset \text{ and }\gN^{out}_x = \emptyset;  \\
        &\beta(x) P(x,z)+ (1-\beta(x)) P^\prime(x,z), && otherwise.\\
    \end{aligned}
    \right.
\end{equation}
Thus, $\textnormal{supp }P(x,\cdot)=\gN^{out}(x)$, $\textnormal{supp }P^\prime(x,\cdot)=\gN^{in}(x)$ and $\textnormal{supp }\gP^\prime(x,\cdot)=\gN(x)$.
\end{definition}
Thus we always have $\sum_{z\in\gV}\gP(x,z)=1$ for $\forall x\in \gV$. Since $x$ is not an isolated vertex, $\gN^{in}_x$ and $\gN^{out}_x$ are not $\emptyset$ at the same time.



\begin{remark}
The factor $\beta(x)$ serves as a balancing factor between the contributions of the out-neighborhood and in-neighborhood of a vertex $x$ in the definition of $\gP(x,z)$. Specifically:
\begin{itemize}
    \item $\beta(x)=\frac{1}{2}$ assigns equal attention to both out- and in-neighbors of $x$, which is discussed in \textnormal{\cite{ozawa_geometric_2020}};
    \item $\beta(x)=\frac{\deg^{\mathrm{out}}_x}{\deg_x}$ ensures that each incident edge of $x$, regardless of its direction, receives equal attention, and it summarizes the three cases in \eqref{eq:balance_ptk}, because $\gN^{in}_x = \emptyset$ ($\gN^{out}_x = \emptyset$) implies $\deg^{in}_x=0$($\deg^{out}_x=0$);
    \item $\beta(x)<\frac{1}{2}$ emphasizes the \emph{in-neighborhood} of $x$, making in-edges more efficient and likely to \emph{decrease} in-edge weight under the Ricci flow;
    \item $\beta(x)>\frac{1}{2}$ emphasizes the \emph{out-neighborhood} of $x$, making out-edges more efficient and likely to \emph{decrease} out-edge weight under the Ricci flow.
\end{itemize}
Intuitively, the choice of $\beta(x)$ allows one to control the relative importance of outgoing and incoming edges in curvature computation, providing flexibility to adapt to different network structures. See~\exref{ex:nonsymmetric_triangle} for an example.
\end{remark}

\begin{remark}
We only talk about strongly connected graphs in this work, thus for any $x\in\gV$, $\gN^{in}_x$ and $\gN^{out}_x$ are both nonempty. So the balancing probability transition kernel writes 
\begin{equation}\label{eq:gP_strong_conn}
    \gP(x,z)=\beta(x) P(x,z)+ (1-\beta(x)) P^\prime(x,z).
\end{equation}
\end{remark}

\begin{definition}[Probability distribution on directed graphs]\label{def:prob_def}
    Let $\gG=(\gV,\gE, w)$ be a weighted directed graph, and $\gP$ be the average probability transition kernel. For any $x\in \gV$, the probability distributions is defined as:
    \begin{equation}\label{eq:mu}
        \mu_x^\alpha(z) = 
            \left\{
                \begin{aligned}
                &\alpha, & z= x;\\
                &(1-\alpha)\gP(x,z), & otherwise,
                \end{aligned}
            \right.
    \end{equation}
    where $\alpha\in [0,1]$.
\end{definition}

\begin{definition}[Laplace operator]
    Let $\gG=(\gV,\gE, w)$ be a weighted directed graph, and $f:V\to\sR$ is a function defined on the graph. For any vertex $x\in\gV$, the Laplace operator on directed graphs is defined as 
    $$\gL f(x) := \sum_{y\sim x}\gP(x,y)(f(x)-f(y))=f(x)-\sum_{y\sim x}\gP(x,y)f(y),$$
    and 
    $$\Delta f(x):=-\gL f(x),$$
    where $y\sim x$ indicates $y$ is an in-/out-neighbor of $x$, i.e., $y\in\gN(x)$.
\end{definition}
\noindent Thus $\gL$ is a functional operator which can be written as $\gL f = (I-\gP)f$. Here, $(If)(x)=f(x)$ for $x\in \gV$ is an identity operator, thus continuous. $\gP$ is a continuous operator, because $\gG$ is finite graph, then for any $f,g$, it holds that 
$$||\gP f -\gP g||=\max_{x\in \gV} \left|\sum_{y\in\gV}\gP(x,y)[f(y)-g(y)]\right|\leq \max_{x\in\gV}\sum_{y\in\gV}|\gP(x,y)|\ ||f-g||_\infty\leq ||f-g||_\infty,$$ 
where $||f||_{\infty}=\max_{x\in\gV} |f(x)|$. Hence $\gL$ is continuous.
Similar as Lemma 3.1 in \cite{ozawa_geometric_2020}, the relation between probability distribution and Laplacian holds:
\begin{equation}\label{eq:sum_f_mu}
    \begin{aligned}
        \sum_{y\in \gV}f(y)\mu_x^\alpha(y) 
        & = \alpha f(x) + (1-\alpha)\sum_{y\sim x}\gP(x,y)f(y) \\
        & = \alpha f(x) + (1-\alpha)(\Delta f(x) + f(x)) \\
        & =f(x) + (1-\alpha)\Delta f(x).
    \end{aligned}
\end{equation}
This equation is useful when analyzing the dual of Wasserstein transportation distance, which we will discuss in the next section.

\subsection{Optimal Transport and Ricci Curvature}
The optimal Transport problem is related to minimize the total transportation cost between two probability distributions~\cite{villani_topics_2003}. 
To formalize this optimal problem, it is straightforward to define a transportation plan and a cost function. Let $\mu$ and $\nu$ are two probability distributions. $\pi:\gV\times \gV\to [0,+\infty)$ is a \textbf{transportation plan} from $\mu$ to $\nu$ if it satisfies:
$$\sum_{y\in\gV}\pi(x,y)=\mu(x), \quad\text{and}\quad\sum_{x\in\gV}\pi(x,y)=\nu(y).$$
We also say that $\pi$ is a \textbf{coupling} from $\mu$ to $\nu$. 
The set of all couplings from $\mu$ to $\nu$ is denoted as $\Pi(\mu, \nu)$, which is non-empty since there is a trivial transportation plan $\pi=\mu\otimes\nu \in \Pi(\mu, \nu)$ ('$\otimes$' indicates the tensor product). The \textbf{cost function} is denoted as $c(x,y)$ which indicates the costs to transport one unit from $x$ to $y$. It is natural that $c$ is non-negative and measurable.
The minimizing problem is formulated as:
\begin{equation}
    \min_{\pi\in\Pi(\mu, \nu)} \sum_{(x,y)\in \gV\times \gV}c(x,y)\pi(x,y).
\end{equation}
The cost function $c(x,y)$ is usually related to the distance between $x$ and $y$.
\begin{definition}[Wasserstein distance]
    Let $\gG=(\gV,\gE, w, d)$ is a weighted directed graph where $d$ is a directed distance function on $\gG$. $\mu$ and $\nu$ are two probability distributions on $\gG$. The \textbf{$\bm{L_1}$-Wasserstein distance} is the infimum of transportation cost, that is,
    \begin{equation}
    W_1(\mu, \nu):=\inf_{\pi\in\Pi(\mu, \nu)} \sum_{(x,y)\in \gV\times \gV}\pi(x,y)d(x,y).
    \end{equation}
The optimal transportation plan is denoted as $\pi^\ast$.
\end{definition}

Since the directed distance $d$ is generally asymmetric, the Wasserstein distance $W_1$ is also asymmetric, i.e., $W_1(\mu,\nu)\neq W_1(\nu,\mu)$ in general. 
However, $W_1$ still satisfies the non-negativity and the triangle inequality.
Note that for strongly connected finite graphs, by the duality theorem of a linear optimization problem\cite{villani_topics_2003}, the Wasserstein distance has a strong duality representation written as:
\begin{equation}\label{eq:W_dual}
    W_1(\mu, \nu)=\sup_{f\in 1\text{-}Lip} \sum_{x\in \gV}f(x)(\nu(x)-\mu(x)),
\end{equation}
where $1\text{-}Lip$ is the set of all 1-Lipschitz function on $\gV$. Here is a useful convexity property of Wasserstein distance obtained from \cite{villani_topics_2003} (Section 7.4). 
\begin{lemma}[Convexity of Wasserstein distance]
    For any four probability distributions $\mu_1, \mu_2, \nu_1, \nu_2$ and $\lambda \in [0,1]$, 
    \begin{equation}
        W_1(\lambda\mu_1+(1-\lambda)\nu_1, \lambda\mu_2+(1-\lambda)\nu_2) \leq \lambda W_1(\mu_1, \mu_2) + (1-\lambda)W_1(\nu_1, \nu_2).
    \end{equation}
\end{lemma}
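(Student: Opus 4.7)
The plan is to prove the convexity by constructing an explicit coupling for the mixture distributions from the optimal couplings of the individual pairs. The dual representation in Equation (\ref{eq:W_dual}) gives a slicker route, but the primal construction is more elementary and makes transparent why asymmetry of $d$ is irrelevant to the argument.

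First, since $\gG$ is a finite strongly connected graph and $\Pi(\mu,\nu)$ is a compact convex polytope, the infimum defining $W_1$ is attained. I would therefore pick optimal transportation plans $\pi_1^\ast \in \Pi(\mu_1,\mu_2)$ and $\pi_2^\ast \in \Pi(\nu_1,\nu_2)$ with
\begin{equation*}
\sum_{(x,y)\in\gV\times\gV}\pi_1^\ast(x,y)\,d(x,y) = W_1(\mu_1,\mu_2),
\qquad
\sum_{(x,y)\in\gV\times\gV}\pi_2^\ast(x,y)\,d(x,y) = W_1(\nu_1,\nu_2).
\end{equation*}
Then I would define the candidate coupling $\pi := \lambda\pi_1^\ast + (1-\lambda)\pi_2^\ast$ for the mixed marginals.

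The second step is to verify that $\pi \in \Pi(\lambda\mu_1+(1-\lambda)\nu_1,\ \lambda\mu_2+(1-\lambda)\nu_2)$. This is a direct computation on the marginals: summing $\pi(x,y)$ over $y$ gives $\lambda\mu_1(x)+(1-\lambda)\nu_1(x)$, and summing over $x$ gives $\lambda\mu_2(y)+(1-\lambda)\nu_2(y)$, using the marginal constraints of $\pi_1^\ast$ and $\pi_2^\ast$. Non-negativity of $\pi$ is immediate from $\lambda\in[0,1]$.

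Finally, since $\pi$ is a feasible coupling, it is an upper bound for the infimum in the definition of $W_1$. Plugging into the cost functional and splitting by linearity gives
\begin{equation*}
W_1\bigl(\lambda\mu_1+(1-\lambda)\nu_1,\ \lambda\mu_2+(1-\lambda)\nu_2\bigr)
\leq \sum_{(x,y)}\pi(x,y)\,d(x,y)
= \lambda W_1(\mu_1,\mu_2) + (1-\lambda)W_1(\nu_1,\nu_2),
\end{equation*}
which is exactly the claim. There is no real obstacle here: the only subtlety is remembering that the directed $W_1$ on directed graphs is defined with a fixed argument order, so one must take the optimal couplings for the same ordered pairs $(\mu_1,\mu_2)$ and $(\nu_1,\nu_2)$ that appear on the right-hand side; the asymmetry of $d$ is never used because the cost function is evaluated pointwise against a single coupling. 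If preferred, an equally short alternative is to start from Equation (\ref{eq:W_dual}), fix a 1-Lipschitz $f$, split the sum $\sum_x f(x)[(\lambda\mu_2+(1-\lambda)\nu_2)(x)-(\lambda\mu_1+(1-\lambda)\nu_1)(x)]$ by linearity, bound each piece by the corresponding $W_1$, and take the supremum in $f$.
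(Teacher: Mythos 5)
Your proposal is correct, but it takes a different route from the paper. The paper dispatches this lemma in one line by invoking the dual formula \eqref{eq:W_dual}: for a fixed $f\in\lipv$ the objective $\sum_x f(x)\bigl(\nu(x)-\mu(x)\bigr)$ is linear in the pair $(\mu,\nu)$, so splitting by linearity, bounding each piece by the corresponding $W_1$, and taking the supremum over $f$ yields convexity --- this is exactly the alternative you sketch in your last sentence. Your main argument instead works on the primal side: take optimal couplings $\pi_1^\ast\in\Pi(\mu_1,\mu_2)$ and $\pi_2^\ast\in\Pi(\nu_1,\nu_2)$, observe that $\lambda\pi_1^\ast+(1-\lambda)\pi_2^\ast$ is a feasible coupling for the mixed marginals, and use linearity of the cost. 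Both arguments are sound. The primal construction is more elementary in that it bypasses the duality theorem entirely (the paper needs strong connectivity to justify \eqref{eq:W_dual}, whereas your argument only needs attainment of the infimum, which you correctly justify by compactness of the coupling polytope on a finite graph --- and even that could be replaced by an $\epsilon$-argument with near-optimal couplings). The dual route is shorter given that \eqref{eq:W_dual} has already been set up, which is presumably why the paper prefers it. Your remark that the asymmetry of $d$ plays no role, because the cost is evaluated pointwise against a single coupling with a fixed argument order, is a worthwhile observation that the paper leaves implicit.
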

\noindent The proof can be easily obtained by the dual formula~\eqref{eq:W_dual}. 
Similar to the definition of Ricci curvature on undirected graphs~\cite{Lin_2011_Ricci}, we give the definition of Ricci curvature on directed graphs.

\begin{definition}[Ricci curvature]\label{def:LLY_def}
Let $\gG=(\gV,\gE, w, d)$ be a directed weighted graph, and $x,y\in\gV$ be any two vertices. 
    \begin{enumerate}[(1)]
        \item \textbf{$\bm{\alpha}$-Ricci curvature}: Given $\alpha\in[0,1]$,
        $$\kappa_\alpha(x,y) = 1 - \frac{W_1(\mu_x^\alpha, \mu_y^\alpha)}{d(x,y)}.$$
        It is trival that $\kappa_1(x,y)=0$ for all $x,y\in \gV$.
        \item \textbf{Lin-Lu-Yau (LLY) Ricci curvature}:
        $$\kappa(x,y) = \lim_{\alpha \to 1} \frac{\kappa_\alpha(x,y)}{1 - \alpha}$$
    \end{enumerate}
\end{definition}

Note that the weights $w$ and distance $d$ are defined independently. 
We prove this is well-defined and can be expressed in terms of a discrete gradient of the Laplacian.
\begin{lemma}[Concavity]\label{lemma:concavity}
For any $\alpha_1, \alpha_2\in [0,1]$, and $\lambda\in [0,1]$, let $\bar{\alpha}=\lambda\alpha_1+(1-\lambda)\alpha_2$. Then for $\forall x,y\in \gV$, 
$$\kappa_{\bar{\alpha}}(x,y)\geq \lambda\kappa_{\alpha_1}(x,y)+(1-\lambda)\kappa_{\alpha_2}(x,y).$$
\end{lemma}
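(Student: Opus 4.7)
The plan is to reduce the claim to the convexity lemma for $W_1$ stated just above, by showing that the map $\alpha\mapsto\mu_x^{\alpha}$ is affine. Concretely, from \eqref{eq:mu} we may write $\mu_x^{\alpha}=\alpha\,\delta_x+(1-\alpha)\,\gP(x,\cdot)$ where $\delta_x$ is the unit mass at $x$, so the dependence on $\alpha$ is purely linear. Therefore, for $\bar\alpha=\lambda\alpha_1+(1-\lambda)\alpha_2$ and for each vertex $z\in\gV$,
\begin{equation*}
\mu_x^{\bar\alpha}(z)=\lambda\,\mu_x^{\alpha_1}(z)+(1-\lambda)\,\mu_x^{\alpha_2}(z),
\end{equation*}
and the analogous identity holds at $y$. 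This is the only computation specific to the directed setting; it uses nothing about $\beta$ or the asymmetry of $d$.

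Given that identity, I would apply the convexity of the Wasserstein distance (the lemma preceding the statement) to the four probability distributions $\mu_x^{\alpha_1},\mu_x^{\alpha_2},\mu_y^{\alpha_1},\mu_y^{\alpha_2}$, obtaining
\begin{equation*}
W_1\!\left(\mu_x^{\bar\alpha},\mu_y^{\bar\alpha}\right)\le \lambda\,W_1(\mu_x^{\alpha_1},\mu_y^{\alpha_1})+(1-\lambda)\,W_1(\mu_x^{\alpha_2},\mu_y^{\alpha_2}).
\end{equation*}
Since $d(x,y)$ is a fixed positive number (the digraph is strongly connected and $x\ne y$ in any nontrivial use; the case $x=y$ is immediate), divide both sides by $d(x,y)$ and subtract from $1$. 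The inequality reverses under negation, giving exactly
\begin{equation*}
\kappa_{\bar\alpha}(x,y)\ge \lambda\,\kappa_{\alpha_1}(x,y)+(1-\lambda)\,\kappa_{\alpha_2}(x,y),
\end{equation*}
which is the claim.

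I do not expect any substantive obstacle: the only subtlety is the affine-in-$\alpha$ structure of $\mu_x^{\alpha}$, and the rest is a one-line appeal to the convexity lemma. Two small points worth noting in the write-up are (i) that $\bar\alpha\in[0,1]$ automatically since the interval is convex, so $\mu_x^{\bar\alpha}$ is itself a legitimate probability distribution of the form prescribed by \defref{def:prob_def}, and (ii) that the result does not depend on the choice of balancing factor $\beta$, because $\beta$ enters only through $\gP$ and not through $\alpha$.
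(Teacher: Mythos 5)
Your proposal is correct and follows exactly the route the paper intends: the paper's own justification is the one-line remark that concavity of $\kappa_\alpha$ follows from the convexity of the Wasserstein distance (citing Lin--Lu--Yau and Ozawa et al.), and your write-up simply supplies the details of that argument, namely the affine dependence $\mu_x^{\alpha}=\alpha\,\delta_x+(1-\alpha)\,\gP(x,\cdot)$ followed by the convexity lemma and the sign flip from $\kappa_\alpha=1-W_1/d(x,y)$. No gaps; your version is in fact more explicit than the paper's.
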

By the convexity of Wasserstein distance, the concavity of $\kappa_\alpha$ holds as in~\cite{Lin_2011_Ricci, ozawa_geometric_2020}.
Then we get that $\frac{\kappa_\alpha}{1-\alpha}$ is an increasing function. 
According to the concavity of $\kappa_\alpha$ for $\alpha\in [0,1]$,  $\kappa''_\alpha\leq 0$.
Let $f(\alpha):=\frac{\kappa_\alpha}{1-\alpha}$, then $f^\prime(\alpha)=\frac{\kappa^\prime_{\alpha}(1-\alpha)+\kappa_{\alpha}}{(1-\alpha)^2}$. Let $g(\alpha):=\kappa^\prime_{\alpha}(1-\alpha)+\kappa_{\alpha}$, then $g^\prime(\alpha)=\kappa''_{\alpha}(1-\alpha)\leq 0$. Thus $g(\alpha)$ is decreasing. We know that $g(1)=\kappa_1=0$, then $g(\alpha)\geq0$. Thus $f^\prime(\alpha)\geq 0$.

Next we show that $\frac{\kappa_\alpha}{1-\alpha}$ is upper bounded. In 2011, Lin-Lu-Yau~\cite{Lin_2011_Ricci} proved that $\frac{\kappa_\alpha}{1-\alpha}\leq \frac{2}{d(x,y)}$ on undirected graphs. Ozawa \etal~\cite{ozawa_geometric_2020} generalized it to directed graphs in 2020. In this work, we generalized the definition of probability transition kernel using a balancing factor $\beta$, but the upper bounded property still holds. 
\begin{lemma}[Upper bounded]\label{lemma:upper_bounded}
    For any $\alpha\in [0,1]$, $\frac{\kappa_\alpha(x,y)}{1-\alpha}\leq \frac{2\textnormal{\diam}(\gG)}{d(x,y)}$.
\end{lemma}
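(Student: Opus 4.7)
The plan is to use the strong duality representation \eqref{eq:W_dual} of $W_1$ together with the identity \eqref{eq:sum_f_mu} expressing $\sum_z f(z)\mu_x^\alpha(z)$ in terms of $\Delta f(x)$. Rearranging $\kappa_\alpha/(1-\alpha)\leq 2\diam(\gG)/d(x,y)$, it is equivalent to prove the lower bound
$$W_1(\mu_x^\alpha,\mu_y^\alpha) \geq d(x,y) - 2(1-\alpha)\diam(\gG).$$
By the dual formula, it suffices to exhibit a single $f\in\lipv$ for which the right-hand side of \eqref{eq:W_dual} meets this bound, so the whole argument reduces to choosing a good test function.

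The natural choice is $f(z):=d(x,z)$. First I would verify $f\in\lipv$: for any $a,b\in\gV$, the triangle inequality gives $d(x,b)\leq d(x,a)+d(a,b)$, hence $f(b)-f(a)\leq d(a,b)$. Then I note $f(y)-f(x)=d(x,y)-0=d(x,y)$, which is precisely the leading term we want. Applying \eqref{eq:sum_f_mu} at $x$ and at $y$ and subtracting yields
$$\sum_{z\in\gV}f(z)\bigl(\mu_y^\alpha(z)-\mu_x^\alpha(z)\bigr) = \bigl(f(y)-f(x)\bigr) + (1-\alpha)\bigl(\Delta f(y)-\Delta f(x)\bigr) = d(x,y) + (1-\alpha)\bigl(\Delta f(y)-\Delta f(x)\bigr).$$

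Next I would control the Laplacian terms. For any $z\in\gV$, using $\Delta f(z)=\sum_{u\sim z}\gP(z,u)(f(u)-f(z))$ and the 1-Lipschitz property (together with the strong connectedness of $\gG$),
$$|\Delta f(z)| \leq \sum_{u\sim z}\gP(z,u)\,|f(u)-f(z)| \leq \sum_{u\sim z}\gP(z,u)\,\max\{d(z,u),d(u,z)\} \leq \diam(\gG),$$
since $\sum_{u\sim z}\gP(z,u)=1$. Hence $\Delta f(y)-\Delta f(x)\geq -2\diam(\gG)$, which combined with the previous display gives
$$W_1(\mu_x^\alpha,\mu_y^\alpha) \;\geq\; d(x,y) - 2(1-\alpha)\diam(\gG),$$
and dividing by $d(x,y)$ and rearranging yields the claim.

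The main obstacle, and the only place where directedness matters, is the Lipschitz check for $f(z)=d(x,z)$: one must use the directed triangle inequality in the correct orientation, because the asymmetric distance could in principle obstruct a naive choice of test function. The bound $|\Delta f(z)|\leq\diam(\gG)$ also tacitly relies on strong connectedness so that $d(z,u)$ and $d(u,z)$ are finite; both ingredients were granted in \secref{sec:preliminaries}. Beyond these points, the argument is a direct computation using the dual formulation and the Laplacian identity \eqref{eq:sum_f_mu}, and does not depend on the specific form of the balancing factor $\beta$.
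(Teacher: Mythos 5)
Your proof is correct, and it reaches the same key intermediate inequality as the paper, namely $W_1(\mu_x^\alpha,\mu_y^\alpha)\ge d(x,y)-2(1-\alpha)\diam(\gG)$, but by a genuinely different mechanism. The paper works on the primal side: it inserts the Dirac measures $\delta_x,\delta_y$ and applies the triangle inequality for $W_1$, then bounds $W_1(\delta_x,\mu_x^\alpha)$ and $W_1(\mu_y^\alpha,\delta_y)$ directly by $(1-\alpha)\diam(\gG)$ using the obvious transport plan. You instead work on the dual side: you plug the single test function $f(z)=d(x,z)$ into \eqref{eq:W_dual}, use the identity \eqref{eq:sum_f_mu} to rewrite the dual objective as $d(x,y)+(1-\alpha)\bigl(\Delta f(y)-\Delta f(x)\bigr)$, and bound the Laplacian terms by $\diam(\gG)$. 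The two arguments are in fact closely parallel --- for your choice of $f$ one has $(1-\alpha)\Delta f(x)=W_1(\delta_x,\mu_x^\alpha)$, so your Laplacian bounds mirror the paper's bounds on the Wasserstein distances to the Dirac masses --- but your route only needs the weak (``$\ge$'') direction of Kantorovich duality rather than the $W_1$ triangle inequality, and it makes the connection to the limit-free/Laplacian formulation in \thmref{thm:ozawa_equiv} more transparent. Your two cautionary remarks are exactly the right ones: the Lipschitz check $d(x,b)\le d(x,a)+d(a,b)$ uses the directed triangle inequality in the correct orientation, and the finiteness of $\max\{d(z,u),d(u,z)\}$ rests on the standing strong-connectedness assumption. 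No gaps.
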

\begin{proof}
    Let $\delta_x(z):=1$ when $z=x$ and $\delta_x(z):=0$ otherwise.
    Notice that 
    $$W_1(\delta_x, \mu_x^{\alpha}) + W_1(\mu_x^{\alpha}, \mu_y^{\alpha}) + W_1(\mu_y^{\alpha},\delta_y) \geq W_1(\delta_x, \delta_y).$$
    Thus 
    \begin{align*}
        W_1(\mu_x^{\alpha}, \mu_y^{\alpha})
        &\geq W_1(\delta_x, \delta_y) - W_1(\delta_x, \mu_x^{\alpha}) - W_1(\mu_y^{\alpha},\delta_y)\\
        & = d(x,y) - W_1(\delta_x, \mu_x^{\alpha}) - W_1(\mu_y^{\alpha},\delta_y).
    \end{align*}
    Where 
    \begin{align*}
        W_1(\delta_x, \mu_x^{\alpha}) 
        & = \sum_{z\in\gN(x)}(1-\alpha)\gP(x,z)d(x,z) \\
        & \leq (1-\alpha)\diam(\gG)\sum_{z\in\gN(x)}\gP(x,z) \\
        & \leq (1-\alpha)\diam(\gG).
    \end{align*}
    Similarly, $W_1(\mu_y^{\alpha},\delta_y)\leq (1-\alpha)\diam(\gG)$. 
    Then we have 
    $$W_1(\mu_x^{\alpha}, \mu_y^{\alpha}) \geq d(x,y)-2(1-\alpha)\diam(\gG).$$
    Here we have 
    $$\frac{\kappa_\alpha(x,y)}{1-\alpha}\leq \frac{2\textnormal{\diam}(\gG)}{d(x,y)}.$$
\end{proof}
Based on the concavity and upper-bounded property, we know that the Lin-Lu-Yau's Ricci curvature on a directed graph $ \gG = (\gV, \gE, w, d) $ is well-defined. 
Moreover, there exists an equivalent expression for the directed Ricci curvature that does not rely on limit operation. To introduce this formulation, we first define the gradient operator on directed graphs:

$$
\nabla_{xy} f := \frac{f(y) - f(x)}{d(x, y)} 
$$

Using the Laplacian operator defined earlier, we now give a limit-free expression for the Lin-Lu-Yau's Ricci curvature on directed graphs. 

\begin{theorem}[Limit-free $\kappa$]\label{thm:ozawa_equiv}
Let $ G = (V, E, w) $ be a directed graph. For any two distinct vertices $ x, y \in V $, the Lin-Lu-Yau's Ricci curvature from $ x $ to $ y $ satisfies:
$$
\kappa(x, y) = \inf_{f \in \mathcal{F}_{xy}} \nabla_{xy} \mathcal{L} f 
$$
where
$$
\mathcal{F}_{xy} := \left\{ f \in \lipv \mid \nabla_{xy} f = 1 \right\}.
$$
\end{theorem}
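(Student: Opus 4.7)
The plan is to convert the limit definition of $\kappa(x,y)$ into an infimum over $\mathcal{F}_{xy}$ by combining Kantorovich duality for $W_1$ with the identity \eqref{eq:sum_f_mu}, and then to pass to the limit $\alpha\to 1$ using the compactness of $\lipxv{x}$ established in the preliminaries.

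First, I will fix distinct $x,y\in\gV$ and, for any $\alpha\in[0,1)$, use the dual formula \eqref{eq:W_dual} to write
\begin{equation*}
W_1(\mu_x^\alpha,\mu_y^\alpha)=\sup_{f\in\lipv}\sum_{z\in\gV}f(z)\bigl(\mu_y^\alpha(z)-\mu_x^\alpha(z)\bigr).
\end{equation*}
Applying \eqref{eq:sum_f_mu} to both $\mu_x^\alpha$ and $\mu_y^\alpha$ and using $\Delta f=-\gL f$, the right-hand side becomes $\sup_{f\in\lipv}\bigl[(f(y)-f(x))-(1-\alpha)(\gL f(y)-\gL f(x))\bigr]$. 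Dividing by $d(x,y)$ and substituting into the definition of $\kappa_\alpha$ yields the key identity
\begin{equation*}
\frac{\kappa_\alpha(x,y)}{1-\alpha}=\inf_{f\in\lipv}\left[\frac{1-\nabla_{xy}f}{1-\alpha}+\nabla_{xy}(\gL f)\right].
\end{equation*}

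Second, I will extract the upper bound $\kappa(x,y)\le\inf_{f\in\gF_{xy}}\nabla_{xy}(\gL f)$ directly: for any admissible $f\in\gF_{xy}$ the first term above vanishes, so the infimum is $\le\nabla_{xy}(\gL f)$ for every $\alpha<1$, and this bound is preserved in the limit $\alpha\to 1$, which exists by \lemref{lemma:concavity} and \lemref{lemma:upper_bounded}.

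Third, for the matching lower bound I will pick, for each $\alpha<1$, an optimal $f_\alpha\in\lipv$ attaining the infimum in the identity (existence follows because, after normalizing $f_\alpha(x)=0$, the objective is continuous on the compact set $\lipxv{x}$). Since the minimum of the objective equals $\kappa_\alpha/(1-\alpha)$, which stays bounded as $\alpha\to 1$ by \lemref{lemma:upper_bounded}, and since $\nabla_{xy}(\gL f_\alpha)$ is uniformly bounded (the values $|f_\alpha(z)|$ are controlled by $\diam(\gG)$ and $\gL$ is a continuous linear operator on the finite-dimensional function space), the nonnegative quantity $(1-\nabla_{xy}f_\alpha)/(1-\alpha)$ is bounded, forcing $\nabla_{xy}f_\alpha\to 1$. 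Extracting a convergent subsequence $f_{\alpha_k}\to f^\ast$ in the compact set $\lipxv{x}$, continuity of $\nabla_{xy}$ and of $\gL$ gives $f^\ast\in\gF_{xy}$ and $\nabla_{xy}(\gL f_{\alpha_k})\to\nabla_{xy}(\gL f^\ast)$. Combined with the pointwise inequality $\kappa_\alpha/(1-\alpha)\ge\nabla_{xy}(\gL f_\alpha)$ (the dropped term is nonnegative), this yields $\kappa(x,y)\ge\nabla_{xy}(\gL f^\ast)\ge\inf_{f\in\gF_{xy}}\nabla_{xy}(\gL f)$, completing the proof.

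The main obstacle I anticipate is the lower-bound step: one must justify why the minimizers $f_\alpha$ can be assumed normalized and then recover a limit in $\gF_{xy}$ rather than a limit merely in $\lipv$. This is where the compactness of $\lipxv{x}$ noted after the definition of Lipschitz functions is essential, together with the uniform upper bound on $\kappa_\alpha/(1-\alpha)$ from \lemref{lemma:upper_bounded} which prevents the penalty $(1-\nabla_{xy}f_\alpha)/(1-\alpha)$ from forcing $\nabla_{xy}f^\ast<1$ in the limit.
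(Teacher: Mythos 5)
Your proposal is correct and follows essentially the same route as the paper: it derives the identity of Lemma~\ref{lemma:ozawa_alpha} from \eqref{eq:W_dual} and \eqref{eq:sum_f_mu}, obtains the upper bound by restricting the infimum to $\gF_{xy}$, and obtains the lower bound by normalizing minimizers into the compact set $\lipxv{x}$, extracting a convergent subsequence, and using the boundedness of $\kappa_\alpha/(1-\alpha)$ to force $\nabla_{xy}f_\alpha\to 1$. Your observation that the bounded penalty term $(1-\nabla_{xy}f_\alpha)/(1-\alpha)$ with $1-\alpha\to 0$ forces the limit into $\gF_{xy}$ is precisely the argument the paper uses.
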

Note that $\mathcal{F}_{xy}$ is not compact. If we set a base point $y$ s.t. $f(y)=0$, then $\mathcal{F}_{xy}$ is compact since boundedness is straightforward, and closeness yields because when $\{f_n\}\in \mathcal{F}_{xy}$, $f_n\to f$ as $n\to\infty$, then $f(y)-f(x) = \lim_{n\to\infty}f_n(y)-\lim_{n\to\infty}f_n(x) = \lim_{n\to\infty}(f_n(y)-f_n(x))= d(x,y)$.

Münch and Wojciechowski firstly proposed this limit-free representation of Lin-Lu-Yau's Ricci curvature in~\cite{Munch_2019_Ollivier}, and Ozawa~\etal\;followed the same line to prove this representation in directed graphs~\cite{ozawa_geometric_2020}. 
We leverage the same technique as in~\cite{ozawa_geometric_2020} to prove this theorem, and carefully check that the proof still holds for our generalized definition of probability transition kernel using a balancing factor $\beta$.
Before proving this theorem, we first introduce a useful lemma from~\cite{ozawa_geometric_2020}(Lemma 3.9).
\begin{lemma}[\cite{ozawa_geometric_2020}]\label{lemma:ozawa_alpha}
Let $ G = (V, E, w) $ be a directed graph. For any pair of vertices $ x, y \in V $ with $ x \ne y $, we have:
\begin{equation}\label{eq:k_alpha_inf_lip1}
\frac{\kappa_\alpha(x, y)}{1 - \alpha} = \inf_{f \in \lipv} \left[ \frac{1}{1 - \alpha} (1 - \nabla_{xy} f) + \nabla_{xy} \mathcal{L} f \right] 
\end{equation}
\end{lemma}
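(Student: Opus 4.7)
The plan is to derive the claimed variational formula directly from the Kantorovich--Rubinstein duality (Equation~\eqref{eq:W_dual}) combined with the identity~\eqref{eq:sum_f_mu} connecting the probability distribution $\mu_x^\alpha$ to the Laplacian $\mathcal{L}$. No limit operation is required; only algebraic rearrangement.

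First, I would start from the definition
$$
\frac{\kappa_\alpha(x,y)}{1-\alpha} = \frac{1}{1-\alpha}\left[1 - \frac{W_1(\mu_x^\alpha,\mu_y^\alpha)}{d(x,y)}\right]
$$
and apply the duality~\eqref{eq:W_dual} to rewrite
$$
W_1(\mu_x^\alpha,\mu_y^\alpha) = \sup_{f \in \lipv} \sum_{z\in\gV} f(z)\bigl(\mu_y^\alpha(z) - \mu_x^\alpha(z)\bigr).
$$
Next, I would invoke Equation~\eqref{eq:sum_f_mu}, which says $\sum_z f(z)\mu_x^\alpha(z) = f(x) + (1-\alpha)\Delta f(x) = f(x) - (1-\alpha)\mathcal{L}f(x)$, and apply it at both $x$ and $y$. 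Subtracting, the summand becomes
$$
\sum_{z\in\gV} f(z)\bigl(\mu_y^\alpha(z) - \mu_x^\alpha(z)\bigr) = \bigl(f(y)-f(x)\bigr) - (1-\alpha)\bigl(\mathcal{L}f(y)-\mathcal{L}f(x)\bigr).
$$
Dividing by $d(x,y)$ and using the definition of $\nabla_{xy}$, this equals $d(x,y)\bigl[\nabla_{xy}f - (1-\alpha)\nabla_{xy}\mathcal{L}f\bigr]$ after pulling $d(x,y)$ out.

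Then I would plug this expression back, obtaining
$$
\frac{W_1(\mu_x^\alpha,\mu_y^\alpha)}{d(x,y)} = \sup_{f \in \lipv}\Bigl[\nabla_{xy}f - (1-\alpha)\nabla_{xy}\mathcal{L}f\Bigr],
$$
and convert the $\sup$ into an $\inf$ via the identity $1 - \sup_f(\cdot) = \inf_f(1 - \cdot)$, yielding
$$
\kappa_\alpha(x,y) = \inf_{f\in\lipv}\Bigl[\bigl(1-\nabla_{xy}f\bigr) + (1-\alpha)\,\nabla_{xy}\mathcal{L}f\Bigr].
$$
Finally, dividing both sides by $(1-\alpha)$ gives exactly~\eqref{eq:k_alpha_inf_lip1}.

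There is no serious obstacle here: the argument is a clean computation. The only small point that merits care is that $\lipv$ is not compact, so I would implicitly normalize by fixing $f(y)=0$ (as already noted in the paragraph following Theorem~\ref{thm:ozawa_equiv}) to ensure the $\sup$/$\inf$ is attained; this does not affect the value since all quantities $\nabla_{xy}f$, $\nabla_{xy}\mathcal{L}f$, and the summand in the duality formula are invariant under adding a constant to $f$. Apart from this, the proof amounts to substituting~\eqref{eq:sum_f_mu} into~\eqref{eq:W_dual} and rewriting.
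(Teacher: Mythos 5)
Your proposal is correct and is exactly the argument the paper intends: the paper itself only remarks that the lemma ``follows from \eqref{eq:W_dual} and \eqref{eq:sum_f_mu},'' and your computation---substituting the Laplacian identity into the Kantorovich duality, converting the supremum to an infimum, and dividing by $1-\alpha$---is precisely that straightforward derivation, carried out correctly (the sign bookkeeping with $\Delta=-\gL$ checks out). The only cosmetic issue is the sentence ``Dividing by $d(x,y)$ \dots this equals $d(x,y)[\cdots]$,'' which conflates the summand with its quotient, but the subsequent display is right.
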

\noindent By~\eqref{eq:W_dual} and~\eqref{eq:sum_f_mu}, the proof of this lemma is straightforward.  

\begin{proof}[Proof of \thmref{thm:ozawa_equiv}]\label{pf:thm_}
From Lemma \ref{lemma:ozawa_alpha}, we have:

\begin{align*}
    \frac{\kappa_\alpha(x,y)}{1 - \alpha} 
    &= \inf_{f \in \lipv} \left[ \frac{1}{1 - \alpha}(1 - \nabla_{xy} f) + \nabla_{xy} \mathcal{L} f \right] \\
    &\leq \inf_{f \in \mathcal{F}_{xy}} \left[ \frac{1}{1 - \alpha}(1 - \nabla_{xy} f) + \nabla_{xy} \mathcal{L} f \right]\\
    & = \inf_{f \in \mathcal{F}_{xy}} \nabla_{xy} \mathcal{L} f
\quad \text{(since } \nabla_{xy} f = 1)
\end{align*}
Taking $ \alpha \to 1 $, we obtain:
$$
\kappa(x,y) \leq \inf_{f \in \mathcal{F}_{xy}} \nabla_{xy} \mathcal{L} f
$$

To prove the opposite inequality, let $\Phi_{\alpha}(\cdot)$ be a functional defined by:
$$\Phi_\alpha(f):=\frac{1}{1-\alpha}\left(1-\nabla_{xy}f\right)+\nabla_{xy}\gL f.$$
Notice that for any constant $ c \in \mathbb{R} $, let $ g(x) = f(x) + c $ for any $x\in\gV$. Then:
\begin{align*}
    \Phi_\alpha(g) 
    & = \frac{1}{1-\alpha}\left(1-\nabla_{xy}g\right)+\nabla_{xy}\gL g \\
    & = \frac{1}{1-\alpha}\left(1-\nabla_{xy}(f + c)\right)+\nabla_{xy}\gL (f + c) \\
    & = \frac{1}{1-\alpha}\left(1-\nabla_{xy}f\right)+\nabla_{xy}\gL f \\
    & = \Phi_\alpha(f).
\end{align*}
This indicates that $\Phi_\alpha(\cdot)$ eliminates a constant drift for $f\in \lipv$. Recall
$$
\lipxv{x} := \{ f \in \lipv \mid f(x) = 0 \}
$$
is compact. Actually, $\lipxv{x}$ is a function space of $f:\gV\to \sR$ which eliminates a constant drift in $\lipv$ by $-f(x)$. Therefore, \eqref{eq:k_alpha_inf_lip1} holds in $\lipxv{x}$ as 
\begin{equation}\label{eq:k_alpha_inf_lip1x}
\frac{\kappa_\alpha(x,y)}{1 - \alpha}
= \inf_{f \in \text{Lip}_{1,x}(\gV)} \left[ \frac{1}{1 - \alpha}(1 - \nabla_{xy} f) + \nabla_{xy} \mathcal{L} f \right].
\end{equation}
Next, we are going to verify the continuity of $\Phi_\alpha (f)$ for fixed $x,y\in \gV$ on $\lipxv{x}$. Let the norm on $\lipxv{x}$ is $||f||_\infty :=\sup_{v\in\gV}|f(v)|$. For $f\in \lipxv{x}$, $f(x)=0$, thus
\begin{itemize}
    \item $\nabla_{xy}(\cdot)$ is continuous. For any $f, g \in \lipxv{x}$,  $|\nabla_{xy}f-\nabla_{xy}g| = \left\vert\frac{f(y)-f(x)}{d(x,y)}-\frac{g(y)-g(x)}{d(x,y)}\right\vert=\left\vert\frac{f(y)-g(y)}{d(x,y)}\right\vert\leq \frac{||f-g||_\infty}{d(x,y)}$.
    \item $\nabla_{xy}\gL (\cdot)$ is continuous. Recall $\gL$ is continuous, i.e., for any $f, g \in \lipxv{x}$, $|\gL f - \gL g|\leq C ||f-g||_\infty$ for come constant $C$. Thus $|\nabla_{xy}\gL f - \nabla_{xy}\gL g| \leq \frac{2C}{d(x,y)} ||f-g||_\infty.$
\end{itemize}
Hence $\Phi_\alpha(\cdot)$ is a continuous functional operator. 

By compactness of $\lipxv{x}$ and continuity of $\Phi_\alpha$, for a sequence of $\alpha\to 0$, according to Weierstrass theorem, there exists a sequence of $ \{f_{\alpha}\} \subset \text{Lip}_{1,x}(\gV) $ such that:
$$
f_{\alpha} = \arg\min_{f \in \text{Lip}_{1,x}(\gV)} \Phi_{\alpha} (f).
$$
Using the compactness of $\lipxv{x}$ again, we can find a convergent subsequence $ \{f_{\alpha_k}\}_{k=1}^\infty \subset \{f_{\alpha}\} $ (with $ \alpha_k \to 1 $ as $ k \to \infty $) such that
$$
\lim_{k \to \infty} f_{\alpha_k} = f^\ast \in \mathrm{Lip}_{1,x}(V).
$$
\noindent Now we take a closer look at~\eqref{eq:k_alpha_inf_lip1x}. 
On the left hand side of ~\eqref{eq:k_alpha_inf_lip1x}, $\frac{\kappa_\alpha(x,y)}{1 - \alpha} $ is monotone increasing and bounded above on $ \alpha \in [0, 1) $, therefore, 
$
\lim_{\alpha \to 1} \frac{\kappa_\alpha(x,y)}{1 - \alpha} \text{ exists}.
$
On the right hand side of~\eqref{eq:k_alpha_inf_lip1x}, because $f\in \lipxv{x}$ and $\gG$ is finite, then 
\begin{equation}\label{eq:f_bound}
||f||_\infty=\max_{u\in\gV}|f(u)| = \max_{u\in\gV} \left\{\max\left\{f(u), -f(u)\right\}\right\}
= \max_{u\in\gV} \left\{\max\left\{d(x,u), d(u,x)\right\}\right\}\leq \diam(\gG).
\end{equation}
Then $ | \nabla_{xy} \gL f |$ is bounded, i.e., 
\begin{equation}\label{eq:kappa_nabla_bound}
    \begin{aligned}
        |\nabla_{xy}\gL f| 
        & = \left|\frac{\gL f(y) - \gL f(x)}{d(x,y)}\right| \\
        & = \left|\frac{(I-\gP)f(y) - (I-\gP)f(x)}{d(x,y)}\right| \\
        & \leq \max_{u,v\in\gV} |1 + \gP(u,v)|\frac{ \left|f(y)\right|}{d(x,y)} \\
        & \leq \max_{u,v\in\gV} |1 + \gP(u,v)| \frac{||f||_\infty}{d(x,y)}\leq 2\frac{\diam(\gG)}{d(x,y)}.
    \end{aligned}
\end{equation}

As $\alpha$ approximates $1$, the existence of limit in the right hand side of~\eqref{eq:k_alpha_inf_lip1x} requires $  \frac{1}{1 - \alpha}(1 - \nabla_{xy} f_{\alpha})$ converging. Therefore $ \lim_{\alpha \to 1} (1 - \nabla_{xy} f_{1 - \alpha}) = 0 $, or equivalently, $ \lim_{\alpha \to 1} \nabla_{xy} f_{\alpha} = 1 $.
Thus by the continuity of $\nabla_{xy}(\cdot)$ operator, the following holds:
$$
\nabla_{xy} f^\ast = \nabla_{xy} \lim_{k \to \infty} f_{\alpha_k} = \lim_{k \to \infty} \nabla_{xy} f_{\alpha_k} = 1 \Rightarrow f^\ast \in \gF_{xy}.
$$
Also note that since $ f_{\alpha} $ is 1-Lipschitz, we have $ \nabla_{xy} f_{\alpha} \leq 1 $, and $\nabla_{xy}\gL (\cdot)$ is continuous, hence the following inequality holds:
$$
\kappa(x, y) = \lim_{\alpha \to 1} \left[ \frac{1}{1 - \alpha}(1 - \nabla_{xy} f_{\alpha}) + \nabla_{xy} \gL f_{\alpha} \right] 
\geq \lim_{k\to \infty} \nabla_{xy}\gL f_{\alpha_k}
= \nabla_{xy} \gL f^\ast \geq \inf_{f \in \gF_{xy}} \nabla_{xy} \gL f.
$$

Combining the conclusions from “$ \leq $” and “$ \geq $”, we obtain
$$
\kappa(x, y) = \inf_{f \in \gF_{xy}} \nabla_{xy} \gL f = \inf_{f \in \gF_{xy}} \nabla_{yx} \Delta f.
$$

\end{proof}

\section{Ricci Flow on Directed Graphs}\label{sec:directed_flow}
Given a directed graph $\gG=(\gV, \gE, \vw, d)$, and $\kappa_e(t)$ is the directed Lin-Lu-Yau's Ollivier Ricci curvature. Let $m=|\gE|$. Ricci flow deforms the graph weights $\vw(t)$ according to the Ricci curvature on edges, while the distance $d$ between vertex pairs remain unchanged. Formally, we define the Ricci flow as follow:
\begin{definition}[Ricci flow on directed graphs]\label{def:ricci_flow}
    For continuous time $t\in [0,+\infty)$, the edge weights on $\gG$ at time $t$ is denoted as $\vw(t)=(w_{e_1}(t), w_{e_2}(t), \dots, w_{e_m}(t))\in \sR^m_+$. Let $\vw^0=((w_{e_1}^0, w_{e_2}^0, \dots, w_{e_m}^0)$ and $\sum_{i=1}^mw_{e_i}^0=1$. Then the Ricci flow on directed graph $\gG$ is formulated as the following ODE:
    \begin{equation}\label{eq:continue_ricci_flow}
        \left\{\begin{array}{l}
        \vw(0) =\vw^0\\
        \frac{d w_e(t)}{dt}=-\kappa_e(t) w_e(t) + w_e(t)\sum_{h\in\gE}\kappa_h(t)w_h(t).
        \end{array}\right.
    \end{equation}
\end{definition}
\noindent \eqref{eq:continue_ricci_flow} is a scaling normalized Ricci flow of the unnormalized version:
\begin{equation}\label{eq:continue_ricci_flow_unnormalized}
    \left\{\begin{array}{l}
        \tilde{\vw}(0) =\tilde{\vw}^0\\
        \frac{d \tilde{w}_e(t)}{dt}=-\kappa_e(t) \tilde{w}_e(t).
        \end{array}\right.
\end{equation}
\noindent The volume of a graph is defined as $\vol(t)=\sum_{e\in\gE}w_e(t)$. Thus the normalized Ricci flow in \eqref{eq:continue_ricci_flow} preserves total volume, because $\sum_{i=1}^mw_{e_i}(0)=1$, and 
\begin{align*}
    \frac{d\vol(t)}{dt}& =\sum_{e\in\gE}\left[-\kappa_e(t) w_e(t) + w_e(t)\sum_{h\in\gE}\kappa_h(t)w_h(t)\right]\\
    & = -\sum_{e\in\gE}\kappa_e(t) w_e(t) + \sum_{e\in\gE}w_e(t)\sum_{h\in\gE}\kappa_h(t)w_h(t)\\
    & =0.
\end{align*}
The solution to \eqref{eq:continue_ricci_flow} (denoted as $\vw (t)=\{w_e(t)\}_{e\in\gE}$) and the solution of \eqref{eq:continue_ricci_flow_unnormalized} (denoted as $\tilde{\vw}(t)=\{\tilde{w}_e(t)\}_{e\in\gE}$) have a relation $w_e(t)=\tfrac{\tilde{w}_e(t)}{\sum_{h\in\gE}\tilde{w}_h(t)}$. The evolution process of curvature $\kappa_e$ are the same because it is not sensitive to scaling change. Readers may refer to~\cite{bai_Ollivier_2024} for more details.
Next, We prove existence and uniqueness of the solution of \eqref{eq:continue_ricci_flow}.

\begin{theorem}
     \eqref{eq:continue_ricci_flow} has unique solution $\vw(t)$ for $t\in [0,+\infty)$.
\end{theorem}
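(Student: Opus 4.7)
The plan is to recast \eqref{eq:continue_ricci_flow} as an autonomous ODE $\dot{\vw} = F(\vw)$ on the open cone $\Omega := \{\vw \in \sR^m : w_e > 0 \text{ for every } e \in \gE\}$, with
$$
F_e(\vw) := w_e\left[-\kappa_e(\vw) + \sum_{h \in \gE} \kappa_h(\vw)\, w_h\right],
$$
and to apply the Picard--Lindel\"of theorem for local existence and uniqueness, then extend the maximal solution to $[0,+\infty)$ using the volume-preserving identity established just before the statement together with a quantitative positivity bound on each $w_e(t)$. Since $F$ is polynomial in $\vw$ and in the curvatures $\kappa_e(\vw)$, the only genuinely analytic input I need is that each $\kappa_e$ depends locally Lipschitz-continuously on $\vw$.

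For this Lipschitz step I would first note that, on $\Omega$, the balancing kernel $\gP(x,z)$ given by \eqref{eq:gP_strong_conn} is a rational function of $\vw$ whose denominators $\sum_{u \in \gN^{out}(x)} w_{xu}$ and $\sum_{u \in \gN^{in}(x)} w_{ux}$ are strictly positive, so $\gP$, the measures $\mu_x^\alpha$, and the Laplacian $\gL = I - \gP$ all depend smoothly on $\vw$, with Lipschitz constants uniform on any compact $K \subset \Omega$. Then I would invoke the limit-free formula of \thmref{thm:ozawa_equiv},
$$
\kappa_e(\vw) = \inf_{f \in \gF_{xy} \cap \lipxv{y}} \nabla_{xy}\, \gL_\vw f \qquad (e = (x,y)),
$$
where the infimum is over the compact set identified in the remark after \thmref{thm:ozawa_equiv}. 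Since the integrand $(\vw,f)\mapsto \nabla_{xy}\gL_\vw f$ is jointly continuous and has $\vw$-derivatives bounded uniformly in $f$ on $K$, the envelope inequality $|\inf_f h(\vw_1,f) - \inf_f h(\vw_2,f)| \le \sup_f|h(\vw_1,f)-h(\vw_2,f)|$ makes $\kappa_e$ Lipschitz on $K$. Hence $F$ is locally Lipschitz on $\Omega$, and Picard--Lindel\"of yields a unique maximal solution $\vw(t)$ on some $[0, T_{\max})$.

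To upgrade this to $T_{\max} = +\infty$, I would combine the volume identity $\sum_e w_e(t) \equiv 1$ (already derived) with strict positivity. Rewriting the ODE as $\frac{d}{dt}\log w_e(t) = -\kappa_e(t) + \sum_h \kappa_h(t)w_h(t)$ and using the uniform bound $|\kappa_e(t)| \le 2\diam(\gG)/\min_{e'}d(e')$ from Lemma~\ref{lemma:upper_bounded} (together with its lower-bound counterpart, obtained by applying the triangle inequality $W_1(\delta_x,\delta_y) \le W_1(\delta_x,\mu_x^\alpha) + W_1(\mu_x^\alpha,\mu_y^\alpha) + W_1(\mu_y^\alpha,\delta_y)$ in the opposite direction), the right-hand side is bounded by a constant $M$ depending only on $\gG$ and the fixed metric $d$. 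Integrating gives $w_e(t) \ge w_e(0)\, e^{-Mt} > 0$, so $\vw(t)$ remains in a compact subset of $\Omega$ on every bounded time interval. The standard escape criterion then forces $T_{\max} = +\infty$.

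The principal obstacle I anticipate is the Lipschitz regularity of $\kappa_e$ in $\vw$: the definition via a limit of Wasserstein values, or equivalently an infimum over an infinite family of $1$-Lipschitz functions, is not manifestly differentiable in $\vw$. The limit-free representation of \thmref{thm:ozawa_equiv} is essential here, because it reduces the problem to an envelope of $\vw$-smooth functions over a compact parameter set; the care required is in verifying that the $\vw$-gradient bound on $\nabla_{xy}\gL_\vw f$ is uniform in $f \in \gF_{xy} \cap \lipxv{y}$, which in turn reduces to the fact that $\gP$ and its $\vw$-derivatives are uniformly bounded on any $K \Subset \Omega$.
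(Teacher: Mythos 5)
Your proposal is correct and follows essentially the same route as the paper's proof: local existence via Picard--Lindel\"of after establishing local Lipschitz dependence of $\kappa_e$ on $\vw$ through the limit-free formula of \thmref{thm:ozawa_equiv} (infimum over a compact, base-point-normalized family of $1$-Lipschitz functions, combined with the envelope inequality), and global extension via the uniform curvature bound, which traps each $w_e(t)$ between $w_e(0)e^{-Mt}$ and $w_e(0)e^{Mt}$ so the solution cannot escape the positive cone in finite time. The only cosmetic differences are that you work on the open cone with compact subsets $K\Subset\Omega$ where the paper uses the simplex slice $R_\delta$, and that you obtain the two-sided curvature bound from a reversed triangle inequality where the paper reads it off from the boundedness of $\nabla_{xy}\gL f$; neither changes the substance.
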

\begin{proof}
    Denote $F(\vw, t) := \Big( -\kappa_e(\vw, t) w_e(t) + w_e(t) \sum_{h\in\gE} \kappa_h(\vw, t) w_h(t) \Big)_{e\in \mathcal{E}}$ be a vector functional. Thus ~\eqref{eq:continue_ricci_flow} can be write as 
    $$\frac{d\vw}{dt} = F(\vw, t), \quad \vw(0) = \vw^0.$$
    \noindent\textbf{1. Short time existence.}
    By Cauchy-Lipschitz Theorem, to prove the existence and uniqueness of the solution to~\eqref{eq:continue_ricci_flow} for $t\in [0,T]$ where $0<T<+\infty$ , is to verify the local Lipschitz property of $F(\vw, t)$ w.r.t. $\vw$, and the continuity of $F(\vw, t)$ w.r.t. $t$. Notice that the continuity can be drawn from the Lipschitz property of $F$ w.r.t. $\vw$ and the continuity of $\vw(t)$. The second condition always holds by the ODE system in~\eqref{eq:continue_ricci_flow}. Therefore, we only need to prove the Lipschitz property.

    By the condition in~\defref{def:ricci_flow}, for any initial state $\vw^0$, there exists $\delta<\delta_0:=\min_{e\in \gE}w^0_e$ such that 
    $$
    \vw^0\in R_\delta := \{ (w_1, \dots, w_m) : \sum_{i=1}^m w_i = 1, \; w_i \geq \delta \}, 
    $$
    and $R_\delta\times [0,T]$ is close. Observing the right-hand side of~\eqref{eq:continue_ricci_flow}, proving the Lipschitz property of $F(\vw, t)$ suffices to prove the function $ \kappa_e w_e $ is locally Lipschitz for $\vw \in R_\delta $.

    Denote $h_e(\vw):=\kappa_e (\vw)w_e$. For any $\vw, \vw^\prime\in R_\delta$, there exists a constant $L_h$ such that 
    $|h_e(\vw) - h_e(\vw^\prime)|\leq L_h ||\vw-\vw^\prime||_\infty$, since 
    \begin{equation}\label{eq:h_lip}
        \begin{aligned}
            |h_e(\vw) - h_e(\vw^\prime)| 
            & = |\kappa_e (\vw)w_e - \kappa_e (\vw^\prime)w_e^\prime| \\
            & = |\kappa_e (\vw)w_e - \kappa_e (\vw^\prime)w_e + \kappa_e (\vw^\prime)w_e - \kappa_e (\vw^\prime)w_e^\prime| \\
            & \leq |\kappa_e (\vw) - \kappa_e (\vw^\prime)||w_e| + |\kappa_e (\vw^\prime)| |w_e - w_e^\prime| \\
            & \leq |\kappa_e (\vw) - \kappa_e (\vw^\prime)| + |\kappa_e (\vw^\prime)|\ ||\vw - \vw^\prime||_\infty. \quad (0\leq w_e\leq 1)
        \end{aligned}
    \end{equation}
    
    Next, we show that $\kappa_e(\vw)$ is Lipschitz, and $|\kappa_e(\vw)|$ is bounded for all $\vw\in R_\delta$.
    \begin{itemize}
        \item {\it $\kappa_e(\vw)$ is Lipschitz. } 
        Let $$g(f,\vw):= \nabla_{xy}\gL(f,\vw),$$ 
        $$\gF_{xy}^\ast :=\{f\in\gF_{xy}\mid f(x)=0\} = \{f\in\lipv\mid f(y)=d(x,y)\}.$$
        Then $\gF_{xy}^{\ast}$ depends on the distance function $d$ and the chioce of $x$ and $y$. $\gF_{xy}^\ast$ is compact as it fixed a base point to guarantee constant translation invariance, since for any constant $c\in \sR$, $\nabla_{xy}\gL (f+c)=\nabla_{xy}\gL (f)$.
        Thus by \thmref{thm:ozawa_equiv}, 
        $$\kappa_e(\vw)=\inf_{f\in\gF_{xy}}g(f,\vw) = \inf_{f\in\gF_{xy}^\ast}g(f,\vw).$$
        To prove the Lipschitz property of $\kappa_e(\vw)$, we first observe that
        \begin{equation}\label{eq:g_expand}
            \begin{aligned}
                g(f, \vw) 
                 & =\nabla_{xy}\gL(f,\vw) 
                 = \frac{1}{d(x,y)}\left[\gL f(y) - \gL f(x)\right] \\
                 & = \frac{1}{d(x,y)}\left[f(y)-f(x)-\sum_{u\in \gV} \gP_\vw (y,u)f(u) + \sum_{u\in \gV} \gP_\vw (x,u)f(u)\right].
            \end{aligned}
        \end{equation}
        We verify that $g(f, \vw)$ is uniformly Lipschitz for all fixed $f \in \gF_{xy}^\ast$, i.e., there exists $L_g>0$ such that $|g(f,\vw) - g(f, \vw^\prime)| \leq L_g ||\vw - \vw^\prime||_\infty$. It is followed by 
        \begin{equation}\label{eq:g_lip}
            \begin{aligned}
                |g(f, \vw) - g(f, \vw^\prime)| 
                & = \frac{1}{d(x,y)}\left[ -\sum_{u\in \gV} (\gP_\vw-\gP_{\vw^\prime}) (y,u)f(u) + \sum_{u\in \gV} (\gP_\vw-\gP_{\vw^\prime}) (x,u)f(u)  \right] \\
                & \leq \frac{||f||_\infty}{d(x,y)}\sum_{u\in \gV} \left[|(\gP_\vw-\gP_{\vw^\prime}) (y,u)| + |(\gP_\vw-\gP_{\vw^\prime}) (x,u)|\right] \\
                & \leq \frac{2L_{\gP}||f||_\infty}{d(x,y)} ||\vw - \vw^\prime||_\infty,
            \end{aligned}
        \end{equation}
        
        where $L_\gP= (\frac{1}{\delta}+\frac{1}{\delta^2})$. By definition of $\gP$ in~\eqref{eq:balance_ptk} and analyzing $P$(similarly $P^\prime$), for fixed $z, u\in\gV$, assume that $z\to u$, denote 
        $$a:=w_{zu},\quad a^\prime := w^\prime_{zu},\quad b:=\sum_{v:z\to v}w_{zv}\geq \deg^{out}_z \delta,\quad b^\prime:=\sum_{v:z\to v} w^\prime_{zv}\geq \deg^{out}_z \delta,$$
        then 
        \begin{align*}
            |(P_\vw-P_{\vw^\prime}) (z,u)| 
            & = \left|\frac{a}{b} - \frac{a^\prime}{b^\prime}\right| = \left|\frac{ab^\prime - a^\prime b^\prime + a^\prime b^\prime - a^\prime b}{bb^\prime} \right| \\
            & \leq \frac{|a-a^\prime|}{b} + \frac{|a^\prime|\ |b-b^\prime|}{b b^\prime}.
        \end{align*}
        Notice that $0\leq a, a^\prime \leq 1$, $b, b^\prime \geq \deg^{out}_z \delta$, and 
        $$|a-a^\prime| \leq ||\vw - \vw^\prime||_\infty,$$
        $$|b-b^\prime| = \left|\sum_{v:z\to v}w_{zv} - \sum_{v:z\to v}w^\prime_{zv}\right| \leq 
        \sum_{v:z\to v}\left|w_{zv} - w^\prime_{zv}\right| \leq \deg^{out}_x ||\vw -\vw^\prime||_\infty.$$
        Then 
        \begin{align*}
            |(P_\vw-P_{\vw^\prime}) (z,u)| 
             \leq \frac{1}{\deg^{out}_z}\left(\frac{1}{\delta} + \frac{1 }{\delta^2}\right)||\vw -\vw^\prime||_\infty.
        \end{align*}
        Similarly, $|(P^\prime_\vw-P^\prime_{\vw^\prime}) (z,u)| \leq \frac{1}{\deg^{in}_z}\left(\frac{1}{\delta} + \frac{1 }{\delta^2}\right)||\vw -\vw^\prime||_\infty$. 
        
        Thus by~\eqref{eq:balance_ptk}, we have
        \begin{align*}
            & \sum_{u\in \gV} |(\gP_\vw-\gP_{\vw^\prime}) (z,u)|  \\
            & \qquad = \sum_{u\in \gV} \left|\left((\beta(z) P_\vw+ (1-\beta(z)) P^\prime_\vw) - (\beta(z) P_{\vw^\prime}+ (1-\beta(z)) P^\prime_{\vw^\prime})\right) (z,u)\right| \\
            & \qquad = \sum_{u\in \gV} \left|\beta(z) (P_\vw - P_{\vw^\prime}) + (1-\beta(z)) (P^\prime_\vw - P^\prime_{\vw^\prime})  (z,u)\right| \\
            & \qquad \leq \beta(z) \sum_{u\in \gN^{out}(z)} \left| (P_\vw - P_{\vw^\prime})(z,u)\right| + (1-\beta(z))  \sum_{u\in \gN^{in}(x)} \left|(P^\prime_\vw - P^\prime_{\vw^\prime})  (z,u)\right| \\
            & \qquad \leq \beta(z) \sum_{u\in \gN^{out}(x)} \frac{1}{\deg^{out}_z}\left(\frac{1}{\delta} + \frac{1 }{\delta^2}\right)||\vw -\vw^\prime||_\infty + (1-\beta(z))  \sum_{u\in \gN^{in}(x)} \frac{1}{\deg^{in}_z}\left(\frac{1}{\delta} + \frac{1 }{\delta^2}\right)||\vw -\vw^\prime||_\infty \\
            & \qquad \leq \beta(z) \left(\frac{1}{\delta} + \frac{1 }{\delta^2}\right)||\vw -\vw^\prime||_\infty + (1-\beta(z))  \left(\frac{1}{\delta} + \frac{1 }{\delta^2}\right)||\vw -\vw^\prime||_\infty \\
            & \qquad = \left(\frac{1}{\delta} + \frac{1 }{\delta^2}\right)||\vw -\vw^\prime||_\infty := L_\gP ||\vw -\vw^\prime||_\infty.
        \end{align*}
        Notice that $f\in\gF_{xy}^\ast$, which implies that $||f||_\infty\leq \diam (\gG)$ by~\eqref{eq:f_bound}.
        Then for $\forall f\in \gF_{xy}^\ast$, ~\eqref{eq:g_lip} writes
        \begin{equation}\label{eq:g_lip2}
        |g(f, \vw) - g(f, \vw^\prime)| \leq \frac{2L_{\gP}\diam (\gG)}{d(x,y)} ||\vw - \vw^\prime||_\infty := L_g ||\vw - \vw^\prime||_\infty.
        \end{equation}

        Finally, for $\kappa_e(\vw) = \inf_{f\in\gF_{xy}^\ast} g(f\vw)$, we show that $|\kappa_e (\vw) - \kappa_e(\vw^\prime)| \leq L_g ||\vw- \vw^\prime||_\infty$. 
       Since $g(f, \vw)$ is continuous for $f$ in the compact set $\gF_{xy}^\ast$, for any $\epsilon > 0$, there exists $f_\epsilon \in \gF_{xy}^\ast$ such that $g(f_\epsilon, \vw) < \kappa_e(\vw) + \epsilon$. Thus by~\eqref{eq:g_lip2}, 
        $$\kappa_e(\vw^\prime) \leq g(f_\epsilon, \vw^\prime) \leq g(f_\epsilon, \vw) + L_g ||\vw-\vw^\prime||_\infty < \kappa_e(\vw) + \epsilon + L_g ||\vw-\vw^\prime||_\infty.$$
        As $\epsilon\to 0$, $\kappa_e(\vw^\prime) \leq \kappa_e(\vw) + L_g ||\vw-\vw^\prime||_\infty$. By exchanging $\vw$ and $\vw^\prime$, we get the opposite inequality $\kappa_e(\vw) \leq \kappa_e(\vw^\prime) + L_g ||\vw-\vw^\prime||_\infty$. Thus 
        \begin{equation}\label{eq:kappa_lip}
            |\kappa_e(\vw)-\kappa_e(\vw^\prime)| \leq L_g ||\vw-\vw^\prime||_\infty, 
        \end{equation}
        where $L_g=\frac{2L_{\gP}\diam (\gG)}{d(x,y)}$.
        \item {\it $|\kappa_e(\vw)|$ is bounded.} As showed in~\eqref{eq:kappa_nabla_bound}  and~\thmref{thm:ozawa_equiv}, 
         \begin{equation}\label{eq:kappa_bound}
             |\kappa_e(\vw)| =\left|\inf_{f\in \gF_{xy}^\ast} \nabla_{xy}\gL f\right|\leq \sup_{f\in \gF_{xy}^\ast} \left|\nabla_{xy}\gL f\right| \leq \frac{2\diam(\gG)}{d(x,y)}.
         \end{equation}
    \end{itemize}
    \eqref{eq:h_lip} writes 
    $$|h_e(\vw) - h_e(\vw^\prime)| \leq \frac{2\diam(\gG)}{d(x,y)} (L_\gP +1)  ||\vw - \vw^\prime||_\infty := L_h ||\vw - \vw^\prime||_\infty,$$
    where $L_h = \frac{2\diam(\gG)}{d(x,y)} (\frac{1}{\delta} +\frac{1}{\delta^2} +1)$. 
    
    Thus $F(\vw, t)$ is continuous w.r.t. $t$, and local Lipschitz w.r.t. $\vw$. There exists a unique solution for ODE system in~\eqref{eq:continue_ricci_flow} for $t\in [0,T]$.

    \noindent\textbf{2. Long time existence.}
    According to the Extension Theorem~\cite{Zhang_2006_Ordinary}, let $$T^\ast = \sup_{T>0} \{\eqref{eq:continue_ricci_flow} \text{ has unique solution in } [0, T]\},$$
    and $[0,T^\ast)$ be the maximal interval of existence, then $T^\ast < +\infty$ only if $|\vw(t)| \to 0$ or $|\vw(t)|\to +\infty$ as $t\to T^{\ast-}$. 
    Let $d_{min}=\min_{u,v\in\gE}d(u,v)$ and $d_{max}=\max_{u,v\in\gE}d(u,v)=\diam(G)$. For any $e=(x,y)\in\gE$, according to the boundedness of $\kappa_e$ in~\eqref{eq:kappa_bound}, we have
    $$-\tfrac{2d_{max}}{d_{min}}\leq -\tfrac{2\diam(G)}{d(x,y)}\leq \kappa_e \leq \tfrac{2\diam(G)}{d(x,y)} \leq \tfrac{2d_{max}}{d_{min}}.$$
    Then then evolution of edge weight in~\eqref{eq:continue_ricci_flow} is bounded by:
    \begin{align*}
        \frac{dw_e(t)}{dt} 
        & \geq -\frac{2d_{max}}{d_{min}}w_e(t)-w_e(t)\sum_{h\in \gE}\frac{2d_{max}}{d_{min}}w_h(t) \\
        & = -\frac{2d_{max}}{d_{min}}\left(1 + \sum_{h\in\gE}w_h(t)\right)w_e(t) \\
        & = -\frac{4d_{max}}{d_{min}}w_e(t),
    \end{align*}
    and similarly 
    \begin{align*}
        \frac{dw_e(t)}{dt} 
        \leq \frac{2d_{max}}{d_{min}}w_e(t)+w_e(t)\sum_{h\in \gE}\frac{2d_{max}}{d_{min}}w_h(t) = \frac{4d_{max}}{d_{min}}w_e(t),
    \end{align*}
    which means $\vw_e(t)$ is bounded by:
    $$w_e(0)e^{-\frac{4d_{max}}{d_{min}}}\leq w_e(t)\leq w_e(0)e^{\frac{4d_{max}}{d_{min}}}.$$
    Here we have $T^\ast=+\infty$, hence~\eqref{eq:continue_ricci_flow} has unique solution for $t\in [0,+\infty).$
    
\end{proof}

\section{Discrete Ricci flow on directed graphs}\label{sec:discrete_flow}

\subsection{Limit-free Curvature}
The Lin–Lu–Yau (LLY) curvature has attracted significant attention in recent years due to its ability to capture geometric and functional properties of complex networks. In particular, the limit-free formulation of LLY curvature has been extensively applied to undirected graphs~\cite{bai_Ollivier_2024, Lai2022Normalized, Lai2023Deeper}, providing efficient and practical tools for curvature-based analysis without relying on limiting processes. Motivated by these developments, and in order to facilitate computations, we extend a limit-free definition of curvature to directed graphs, which is similar to the undirected version in~\cite{bai2020sum}.
\begin{definition}[$\ast$-coupling on directed graphs]\label{def:limit_free_curv}
    Given a directed graph $\gG=(\gV,\gE, \vw, d)$, for any $z\in \gV$, denote $\mu_z^0$ as $\mu_z$. For any distinct nodes $x, y\in \gV$, coupling $B$ is called an $\ast$-coupling if it satisfies:
    \begin{enumerate}
        \item $B(x,y)>0$, and $B(u, v)\leq 0$ if $u\neq x$ or $v\neq y$.
        \item $\sum_{u, v\in\gV}B(u, v)=0$.
        \item $\sum_{v\in \gV}B(u, v)=-\mu_x(u)$ for $\forall u\neq x$.
        \item $\sum_{v\in \gV}B(u, v) = -\mu_y(v)$ for $\forall v\neq y$.
    \end{enumerate}
    Moreover, $\ast$-coupling based Lin-Lu-Yau Ricci curvature is defined as:
    \begin{equation}\label{eq:limit_free_kappa}
    \kappa^\ast(x,y)=\frac{1}{d(x,y)}\sup_{B}\sum_{u, v\in\gV}B(u, v)d(u,v).
    \end{equation}
\end{definition}
Similar to \cite{bai2020sum}, $\kappa^\ast$ in~\eqref{eq:limit_free_kappa} is a limit-free dual of $\kappa$ in~\defref{def:LLY_def}. 
Although the distance is directed and the definition of probability distribution in~\defref{def:prob_def} is different, the proof of Theorem 2.14 in~\cite{bai2020sum} still holds.

\subsection{Discrete Ricci Flow}
While the continuous Ricci flow on directed graphs is formulated as the system of ODEs in Definition \defref{def:ricci_flow}, for practical implementations, especially in computational settings, we often need a discrete-time approximation. As the discretization process in~\cite{Lai2022Normalized}, the idea is to update edge weights iteratively in each iteration by small steps, following the gradient induced by the curvature.

\begin{definition}[Discrete Ricci Flow on Directed Graphs]\label{def:discrete_ricci_flow}
Let $\gG=(\gV,\gE, \vw, d)$ be a finite directed graph with positive edge weights $\vw=(w_e)_{e\in\gE}$ normalized such that $\sum_{e\in\gE} w_e = 1$. Given an initial weight vector $\vw^0$, the discrete Ricci flow is defined for a sequence of discrete time $t=0,1,2, \dots, T$  with step size $s > 0$ by the iteration:
\begin{equation}\label{eq:discrete_ricci_flow}
    w_e^{t+1} = w_e^t - w_e^t \cdot s \Big(\kappa_e^t - \overline{\kappa}^t\Big), \qquad e\in\gE,
\end{equation}
where
\begin{equation*}
    \overline{\kappa}^t = \sum_{h\in\gE} \kappa_h^t\, w_h^t
\end{equation*}
is the weighted average curvature at time step $t$. 
\end{definition}

\begin{remark}
The update rule~\eqref{eq:discrete_ricci_flow} is an explicit discretization of the continuous Ricci flow ODE~\eqref{eq:continue_ricci_flow}. 
\begin{itemize}
\item It ensures positivity of the weights, i.e., $w_e^{t+1}\ge0$ iff
$
1 - s\big(\kappa_e^t-\overline{\kappa}^t\big)\;\ge\;0 $
for all $e\in\gV$.
In particular, according to~\eqref{eq:kappa_bound}, denote 
$$K:= \frac{2\diam(\gG)}{\min_{e=(x,y)\in \gE}\{d(x,y)\}},$$ 
then $|\kappa_e^t|\le K$ uniformly at time $t$. 
Thus $\kappa_e^t-\overline{\kappa}^t\le 2K$ and any $s\le \frac{1}{2K}$ preserves nonnegativity of $w_e^{t+1}$.
(If $\kappa_e^t-\overline{\kappa}^t\le 0$ the positivity of $w_e^{t+1}$ always holds).

\item The volume of the graph still preserves as $\sum_{e\in\gV}w_e^{t+1}=1$, making it acts as a proper weight in the average curvature $\overline{\kappa}^t$.
\end{itemize}
\end{remark}

\begin{remark}
In~\defref{def:discrete_ricci_flow}, we assumed the total edge weight (``volume'') is normalized to $1$, i.e.\ $\sum_{e\in\gE} w_e = 1$.  This normalization is \emph{not essential}.  One may equivalently work with any fixed total volumne
$\sigma \;:=\; \sum_{e\in\gE} w_e \;>\; 0,$
provided the average curvature is defined with the same normalization. Concretely, put
$$\overline{\kappa}_\sigma(t) \;:=\; \frac{1}{\sigma}\sum_{h\in\gE}\kappa_h(t)\,w_h(t).$$
Then the modified continuous flow preserving total volumne $\sigma$ reads
\begin{equation}\label{eq:continue_ricci_flow_sigma}
\frac{d w_e(t)}{dt}
= -\kappa_e(t)\,w_e(t) + \frac{w_e(t)}{\sigma}\sum_{h\in\gE}\kappa_h(t)\,w_h(t)
= w_e(t)\big(-\kappa_e(t)+\overline{\kappa}_\sigma(t)\big).
\end{equation}
The discrete \eqref{eq:discrete_ricci_flow} is modified analogously:
\begin{equation}\label{eq:discrete_ricci_flow_sigma}
w_e^{t+1} \;=\; w_e^t - s\, w_e^t\big(\kappa_e^t - \overline{\kappa}_\sigma^t\big),
\qquad
\overline{\kappa}_\sigma^t := \frac{1}{\sigma}\sum_{h\in\gE}\kappa_h^t\, w_h^t.
\end{equation}
The solution to \eqref{eq:continue_ricci_flow_sigma} (denoted as $\vw (t)=\{w_e(t)\}_{e\in\gE}$) and the solution of \eqref{eq:continue_ricci_flow_unnormalized} (denoted as $\tilde{\vw}(t)=\{\tilde{w}_e(t)\}_{e\in\gE}$) have a relation $w_e(t)=\tfrac{\sigma}{\sum_{h\in\gE}\tilde{w}_h(t)}\tilde{w}_e(t)$, since 
\begin{align*}
    \frac{dw_e(t)}{dt}
    & = \sigma\cdot \frac{d\frac{\tilde{w}_e(t)}{\sum_{h}\tilde{w}_h(t)}}{dt}=\sigma\cdot\left(\frac{d\tilde{w}_e(t)}{\sum_{h}\tilde{w}_h(t)dt} -\frac{\tilde{w}_e(t)\sum_{h}\frac{d\tilde{w}_h(t)}{dt}}{\left(\sum_{h}\tilde{w}_h(t)\right)^2}\right)  \\
    & = \frac{\sigma }{\sum_{h}\tilde{w}_h(t)} \left(-\kappa_e(t)\tilde{w}_e(t)\right)+\frac{\sigma \tilde{w}_e(t)}{\left(\sum_{h}\tilde{w}_h(t)\right)^2}\sum_{h}\kappa_h(t)\tilde{w}_h(t)\\
    & = -\kappa_e(t)w_e(t) + \frac{w_e(t)}{\sigma}\sum_{h}\kappa_h(t)w_h(t).
\end{align*}
where $\kappa_e(t)$ are the same in both equations by scaling change of weights for any $t\geq 0$, and the volume $\sum_{e}w_e(t) = \sum_{e}\tfrac{\sigma}{\sum_{h\in\gE}\tilde{w}_h(t)}\tilde{w}_e(t)=\sigma$ for all $t\geq 0$.
\end{remark}

\section{Examples and numerical study}\label{sec:exam_app}

\begin{example}[Directed Ricci Flow on Cycles]\label{ex:cycle}
Consider a directed cycle graph $\gG=(\gV,\gE, \vw, d)$ with $n$ vertices $\gV=\{v_1,v_2,\dots, n\}$ and edges
$$\gE=\{e_1=(v_1\to v_2),\; e_2=(v_2\to v_3),\; \dots,\; e_{n-1}=(v_{n-1}\to v_n), \; e_n=(v_n\to v_1)\}.$$
Initialize the distance on edges be $d_{e_i}=1$ for $i=1,\dots, n$, and the edge weights equally as 
$$\vw^0=(w_{e_1}^0,w_{e_2}^0,\dots, w_{e_n}^0)=(\tfrac{\sigma}{n},\tfrac{\sigma}{n},\dots,\tfrac{\sigma}{n}).$$
The balance factor $\beta(v_i)$ in~\eqref{eq:gP_strong_conn} is set as a constant $\beta\in[0,1]$.

\noindent \textbf{Case n=2.} See~\figref{fig:2_cycle}.
\begin{figure}[H]
    \begin{center}
        \begin{tikzpicture}[->,>=stealth',shorten >=1pt,auto,node distance=2.5cm, thick]
            \tikzstyle{vertex}=[circle,draw,fill=blue!15,minimum size=16pt,inner sep=0pt]
            \node[vertex] (v1) {$v_1$};
            \node[vertex] (v2) [right of=v1] {$v_2$};
            \path (v1) edge [bend left] node {$w_{e_1}^t\equiv w_{e_1}^0$} (v2)
                  (v2) edge [bend left] node {$w_{e_2}^t\equiv w_{e_2}^0$} (v1);
        \end{tikzpicture}
    \end{center}
    \caption{Directed 2-Cycle.}
    \label{fig:2_cycle}
\end{figure}
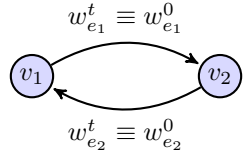
For $\alpha\in(\tfrac{1}{2},1)$, by~\eqref{eq:mu} and~\eqref{eq:balance_ptk}, the probability distributions on $\gV$ are:
$$
\mu_{v_1}^\alpha(z)=
    \begin{cases}
        \alpha, & z=v_1;\\
        (1-\alpha), & z=v_2,
    \end{cases}
    \qquad
    \mu_{v_2}^\alpha(z)=
    \begin{cases}
        (1-\alpha), & z=v_1;\\
        \alpha, & z=v_2.
    \end{cases}
$$
The optimal transport plan sends $(2\alpha - 1)$ from $v_1$ to $v_2$ for $e_1$ or from $v_2$ to $v_1$ for $e_2$, so
$$W_1(\mu_{v_1}^\alpha,\mu_{v_2}^\alpha)=(2\alpha - 1)\cdot d(v_1, v_2) = (2\alpha - 1), \qquad W_1(\mu_{v_2}^\alpha,\mu_{v_1}^\alpha)=(2\alpha - 1)\cdot d(v_2, v_1) = (2\alpha - 1).$$
By~\defref{def:LLY_def},
$$\kappa_\alpha(v_1,v_2)=1-\frac{W_1(\mu_{v_1}^\alpha,\mu_{v_2}^\alpha)}{d(v_1,v_2)}=1-\frac{2\alpha - 1}{1}=2(1 - \alpha).$$

Similarly, $\kappa_\alpha(v_2,v_1)=2(1 - \alpha)$. Then
$$
\kappa_{e_1}=\kappa_{e_2}=\lim_{\alpha\to 1}\tfrac{\kappa_\alpha}{1-\alpha}=2, \qquad \overline{\kappa}_\sigma = 2.
$$
Hence 
$$\frac{d w_{e_i}(t)}{dt}
= w_{e_i}(t)\big(-\kappa_{e_i}(t)+\overline{\kappa}_\sigma(t)\big) = 0\qquad \text{for }i = 1,2.$$
so the weights remain unchanged in this case.

\noindent \textbf{Case n=3.} See~\figref{fig:3_cycle}.
\begin{figure}[t]
    \centering
    \begin{center}
    \begin{tikzpicture}[->,>=stealth',shorten >=1pt,auto,node distance=2cm, thick]
      \tikzstyle{vertex}=[circle,draw,fill=blue!12,minimum size=20pt,inner sep=0pt]
      \begin{scope}[xshift=-3cm]
        \node[vertex] (v1) at (-1, 1) {$v_1$};
        \node[vertex] (v2) at (1, 1) {$v_2$};
        \node[vertex] (v3) at (0, -1.2) {$v_3$};
        
        \path (v1) edge [bend left] node {$w_{e_1}^0=1$} (v2)
              (v2) edge [bend left] node {$w_{e_2}^0=1$} (v3)
              (v3) edge [bend left] node {$w_{e_3}^0=1$} (v1);
        \node at (0,-2) {\small (a) Initialization};
      \end{scope}
    
      \begin{scope}[xshift=3cm]
        \node[vertex] (v1) at (-1, 1) {$v_1$};
        \node[vertex] (v2) at (1, 1) {$v_2$};
        \node[vertex] (v3) at (0, -1.2) {$v_3$};
        
        \path (v1) edge [bend left] node {$w_{e_1}^t\equiv 1$} (v2)
              (v2) edge [bend left] node {$w_{e_2}^t\equiv 1$} (v3)
              (v3) edge [bend left] node {$w_{e_3}^t\equiv 1$} (v1);
        \node at (0,-2) {\small (b) Evolution while $\beta(v_1)=\beta(v_2)= \beta(v_3)$};
      \end{scope}
    \end{tikzpicture}
    \end{center}
    \caption{Directed 3-Cycle.}
    \label{fig:3_cycle}
\end{figure}
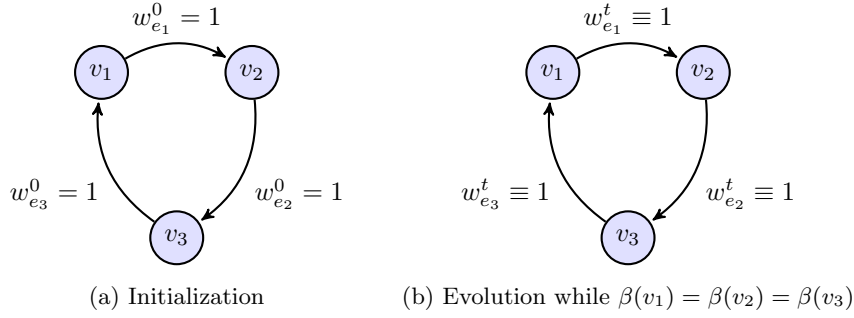
\vspace{4pt}
The probability distributions on $\gV$ are:
$$
\mu_{v_1}^\alpha(z)=
    \begin{cases}
        \alpha, & z=v_1;\\
        (1-\alpha)\beta, & z=v_2;\\
        (1-\alpha)(1-\beta), & z=v_3,
    \end{cases}
\qquad
\mu_{v_2}^\alpha(z)=
    \begin{cases}
        (1-\alpha)(1-\beta), & z=v_1;\\
        \alpha, & z=v_2;\\
        (1-\alpha)\beta, & z=v_3,
    \end{cases}
$$
$$
\mu_{v_3}^\alpha(z)=
    \begin{cases}
        (1-\alpha)\beta, & z=v_1;\\
        (1-\alpha)(1-\beta), & z=v_2;\\
        \alpha, & z=v_3.
    \end{cases}
$$

By symmetry in the cycle graph, for $(v_i, v_j)\in \gE$, the optimal transport distances are:
\begin{equation*}
    W_1(\mu_{v_i}^\alpha,\mu_{v_j}^\alpha)=\left\{
    \begin{aligned}
        &
        1 - 3\beta (1-\alpha),  
        & \quad 0\leq \beta < 0.5; \\
        &
        \alpha - \tfrac{1}{2}(1-\alpha), 
        & \quad \beta = 0.5;\\
        &
        \alpha + (1-\alpha)(3\beta - 2), 
        & \quad 0.5< \beta \leq 1.
    \end{aligned}
    \right.
\end{equation*}
So by~\defref{def:LLY_def}, for each $(v_i,v_j)\in\gE$:
\begin{equation*}
    \kappa_\alpha(v_i,v_j)=1-\frac{W_1(\mu_{v_i}^\alpha,\mu_{v_j}^\alpha)}{d(v_i,v_j)}=\left\{
    \begin{aligned}
        &3\beta(1-\alpha),  
        && 0\leq \beta < 0.5; \\
        & \tfrac{3}{2}(1-\alpha), 
        && \beta = 0.5;\\
        & 
        3 (1-\beta)(1-\alpha), 
        && 0.5< \beta \leq 1,
    \end{aligned}
    \right.
\end{equation*}
\begin{equation*}
    \kappa(v_i,v_j)=\lim_{\alpha\to 1}\frac{\kappa_{\alpha}(v_i,v_j)}{1-\alpha}=\left\{
    \begin{aligned}
        &3\beta,  
        && 0\leq \beta < 0.5; \\
        & \tfrac{3}{2}, 
        && \beta = 0.5;\\
        & 3 (1-\beta), 
        && 0.5< \beta \leq 1.
    \end{aligned}
    \right.
\end{equation*}

Then the average curvature $\overline{\kappa}_\sigma = \kappa(v_i,v_j)$.
Hence the evolution of weights satisfies
$$
\frac{d w_{e_i}(t)}{dt}
= w_{e_i}(t)\big(-\kappa_{e_i}(t)+\overline{\kappa}_\sigma(t)\big)=0,\qquad \text{for }i=1,2,3.
$$
So the weights remain unchanged under the directed Ricci flow process.

\end{example}
Actually, for $n$-cycle ($n\ge 3$) graph satisfies the initial conditions in~\exref{ex:cycle}, the probability distributions on $v_i\in\gV$ writes:
$$
\mu_{v_i}^\alpha(z)=
    \begin{cases}
        \alpha, & z=v_i;\\
        (1-\alpha)\beta, & z:v_i\to z;\\
        (1-\alpha)(1-\beta), & z: z\to v_i; \\
        0, & otherwise.
    \end{cases}
$$
Similar to the computation in~\exref{ex:cycle} (case $n=3$), weights on edges in $n$-cycle remain unchanged. But if we specify the balancing factor for each node as $\beta(v_i)$, the evolution of edge weights is shown in~\figref{fig:3cycle_varybeta}. If a node attends more to the out-flow, the weight on the out-edges decreases and vice versa. 
\begin{figure}[htbp]
    \centering
    \begin{center}
    \begin{tikzpicture}[->,>=stealth',shorten >=1pt,auto,node distance=2cm, thick]
      \tikzstyle{vertex}=[circle,draw,fill=blue!12,minimum size=20pt,inner sep=0pt]
      \begin{scope}[xshift=-3cm]
        \node[vertex] (v1) at (-1, 1) {$v_1$};
        \node[vertex] (v2) at (1, 1) {$v_2$};
        \node[vertex] (v3) at (0, -1.2) {$v_3$};
        
        \path (v1) edge [bend left] node {$w_{e_1}^t${\color{red}$\uparrow$}} (v2)
              (v2) edge [bend left] node {$w_{e_2}^t${\color{blue}$\downarrow$}} (v3)
              (v3) edge [bend left] node {$w_{e_3}^t${\color{blue}$\downarrow\downarrow$}} (v1);
        \node at (-1.5, 1.9) {$\beta(v_1)=0.2$};
        \node at (1.3, 1.9) {$\beta(v_2)=0.4$};
        \node at (0, -2) {$\beta(v_3)=0.7$};

      \end{scope}
    
      \begin{scope}[xshift=3cm]
        \node[inner sep=0pt] (russell) at (1,0){\includegraphics[width=0.6\textwidth]{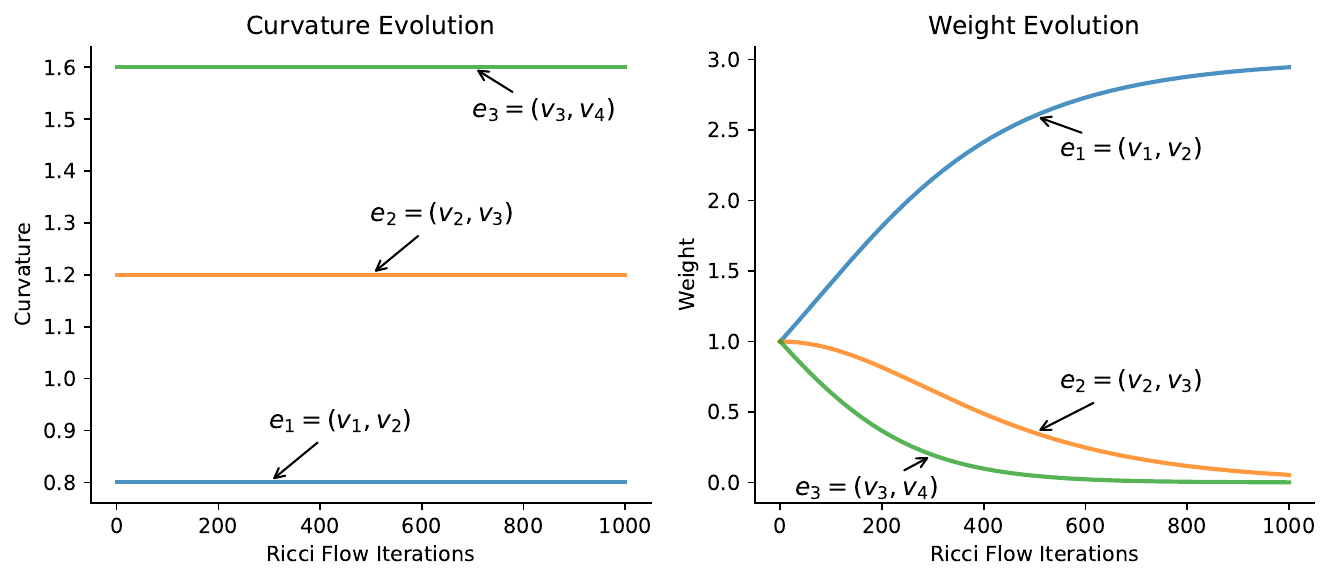}};
        \node at (1,-2.8) {\small Evolution of curvature and weight on each edge.};
      \end{scope}
    \end{tikzpicture}
    \end{center}
    \caption{Illustration of the effect about balance factor $\beta(x)$ in 3-cycles. Set $\beta(v_1)=0.2,\; \beta(v_2)=0.4,\; \beta(v_3)=0.7$. Red arrows indicate increased weight, blue arrows indicate decreased weight.}
    \label{fig:3cycle_varybeta}
\end{figure}
\vspace{4pt}
\begin{example}[Non-symmetric triangle]\label{ex:nonsymmetric_triangle}
Consider the directed (non-symmetric) triangle (see~\figref{fig:nonsymmetric_triangle}) with
$$
\gV=\{v_1,v_2,v_3\},\qquad
\gE=\{e_1=(v_1\to v_2),\; e_2=(v_2\to v_3),\; e_3=(v_3\to v_1),\; e_4=(v_1\to v_3)\}.
$$
We set graph distances $d(u,v)=1$ for all edges $e=(u,v)\in \gE$ and initialize edge weights uniformly. 
Fix $\alpha\in(\tfrac{1}{2},1)$, and let $\beta$ be the balancing factor.
\begin{figure}[H]
    \centering
    \begin{center}
    \def\scale{0.8}
    \begin{tikzpicture}[->,>=stealth',shorten >=1pt,auto,node distance=2cm, thick]
      \tikzstyle{vertex}=[circle,draw,fill=blue!12,minimum size=20pt,inner sep=0pt]
      \node[vertex] (v1) at (0, 0) {$v_1$};
      \node[vertex] (v2) at (2 * \scale, 3.4 * \scale) {$v_2$};
      \node[vertex] (v3) at (4 * \scale, 0) {$v_3$};
      \draw[->,bend left] (v1) to node[midway,left] {$e_1$} (v2);
      \draw[->,bend left] (v2) to node[midway,right] {$e_2$} (v3);
      \draw[->,bend left] (v3) to node[midway,below] {$e_3$} (v1);
      \draw[->,bend left] (v1) to node[midway,above] {$e_4$} (v3);
      \node at (0, -1.2 * \scale) {$\beta(v_1)$};
      \node at (2 * \scale, 4.2 * \scale) {$\beta(v_2)$};
      \node at (4 * \scale, -1.2 * \scale) {$\beta(v_3)$};
    \end{tikzpicture}
    \end{center}
    \caption{Non-sysmetric triangle.}
    \label{fig:nonsymmetric_triangle}
\end{figure}
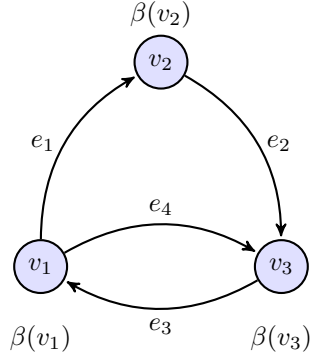
\vspace{4pt}
\noindent Recall
$$
\mu_x^\alpha(x)=\alpha,\qquad \mu_x^\alpha(z)=(1-\alpha)\,\gP(x,z)\ (z\ne x),
$$
with $\gP(x,z)=\beta(x)P(x,z)+(1-\beta(x))P'(x,z)$, and the out / in probability transition kernel
$P(x,z)=\tfrac{w_{xz}}{\sum_{u:x\to u}w_{xu}}$ for $z\in\mathcal N^{out}_x$ and
$P'(x,z)=\tfrac{w_{zx}}{\sum_{u:x\to u}w_{ux}}$ for $z\in\mathcal N^{in}_x$, we get:

$
\begin{aligned}
\bm{\mu_{v_1}^\alpha} &: \quad
\mu_{v_1}^\alpha(v_1)=\alpha,\\
&\qquad \mu_{v_1}^\alpha(v_2)=(1-\alpha)\Big(\beta(v_1)\cdot\tfrac{w_{e_1}}{w_{e_1}+ w_{e_4}} + (1-\beta(v_1))\cdot 0\Big)
=(1-\alpha)\beta(v_1)\tfrac{w_{e_1}}{w_{e_1}+ w_{e_4}},\\
&\qquad \mu_{v_1}^\alpha(v_3)=(1-\alpha)\Big(\beta(v_1)\cdot\tfrac{w_{e_4}}{w_{e_1}+ w_{e_4}} + (1-\beta(v_1))\cdot 1\Big)
=(1-\alpha)\left[1-\beta(v_1)\tfrac{w_{e_1}}{w_{e_1}+ w_{e_4}}\right].
\end{aligned}
$

\medskip
$
\begin{aligned}
\bm{\mu_{v_2}^\alpha} &: \quad
 \mu_{v_2}^\alpha(v_1)=(1-\alpha)(1-\beta(v_2)),\\
& \qquad \mu_{v_2}^\alpha(v_2)=\alpha,\\
&\qquad \mu_{v_2}^\alpha(v_3)=(1-\alpha)\,\beta(v_2)\cdot 1=(1-\alpha)\beta(v_2).
\end{aligned}
$

\medskip
$
\begin{aligned}
\bm{\mu_{v_3}^\alpha} &: \quad
,\mu_{v_3}^\alpha(v_1)=(1-\alpha)\Big(\beta(v_3)\cdot 1 + (1-\beta(v_3))\cdot\tfrac{w_{e_4}}{w_{e_2}+ w_{e_4}}\Big)
=(1-\alpha)\left[1 - (1-\beta(v_3))\tfrac{w_{e_1}}{w_{e_1}+ w_{e_4}}\right]\\
&\qquad \mu_{v_3}^\alpha(v_2)=(1-\alpha)\cdot(1-\beta(v_3))\tfrac{w_{e_2}}{w_{e_2}+ w_{e_4}}
,\\
&\qquad \mu_{v_3}^\alpha(v_3)=\alpha.
\end{aligned}
$

\noindent Let $\vw^0 = (w_{e_1}^0,w_{e_2}^0, w_{e_3}^0, w_{e_4}^0)=(1,1,1,1).$
Then the distributions at $t=0$ become:

$$
\begin{aligned}
\mu_{v_1}^\alpha &= (\,\alpha,\; (1-\alpha)\tfrac{\beta(v_1)}{2},\; (1-\alpha)\big(1-\tfrac{\beta(v_1)}{2}\big)\,)
,\\[3pt]
\mu_{v_2}^\alpha &= (\,(1-\alpha)(1-\beta(v_2)),\; \alpha,\; (1-\alpha)\beta(v_2)\,)
,\\[3pt]
\mu_{v_3}^\alpha &= \Big((1-\alpha)\tfrac{1+\beta(v_3)}{2},\; (1-\alpha)\tfrac{1-\beta(v_3)}{2},\; \alpha\Big).
\end{aligned}
$$
\begin{figure}[htbp]
    \centering
    \begin{center}
    \begin{tikzpicture}[->,>=stealth',shorten >=1pt,auto,node distance=2cm, thick]
      \tikzstyle{vertex}=[circle,draw,fill=blue!12,minimum size=20pt,inner sep=0pt]
      \def\scale{0.7}
      \begin{scope}[xshift=-6cm]
          \node[vertex] (v1) at (0,0) {$v_1$};
          \node[vertex] (v2) at (2 * \scale,3.4 * \scale) {$v_2$};
          \node[vertex] (v3) at (4 * \scale,0) {$v_3$};
          \draw[->,bend left, OliveGreen] (v1) to node[midway,left] {$w_{e_1}\uparrow$} (v2);
          \draw[->,bend left, OliveGreen] (v2) to node[midway,right] {$w_{e_2}\uparrow$} (v3);
          \draw[->,bend left, BrickRed] (v3) to node[midway,below] {$w_{e_3}\downarrow$} (v1);
          \draw[->,bend left, BrickRed] (v1) to node[midway,above] {$w_{e_4}\downarrow$} (v3);
          \node at (0,-1.2 * \scale) {$\beta(v_1)=\frac{1}{2}$};
          \node at (2 * \scale,4.2 * \scale) {$\beta(v_2)=\frac{1}{2}$};
          \node at (3.8 * \scale,-1.2 * \scale) {$\beta(v_3)=\frac{1}{2}$};
      \end{scope}
    
      \begin{scope}[xshift=2.5cm]
        \node[inner sep=0pt] (russell) at (1,1){\includegraphics[width=0.7\textwidth]{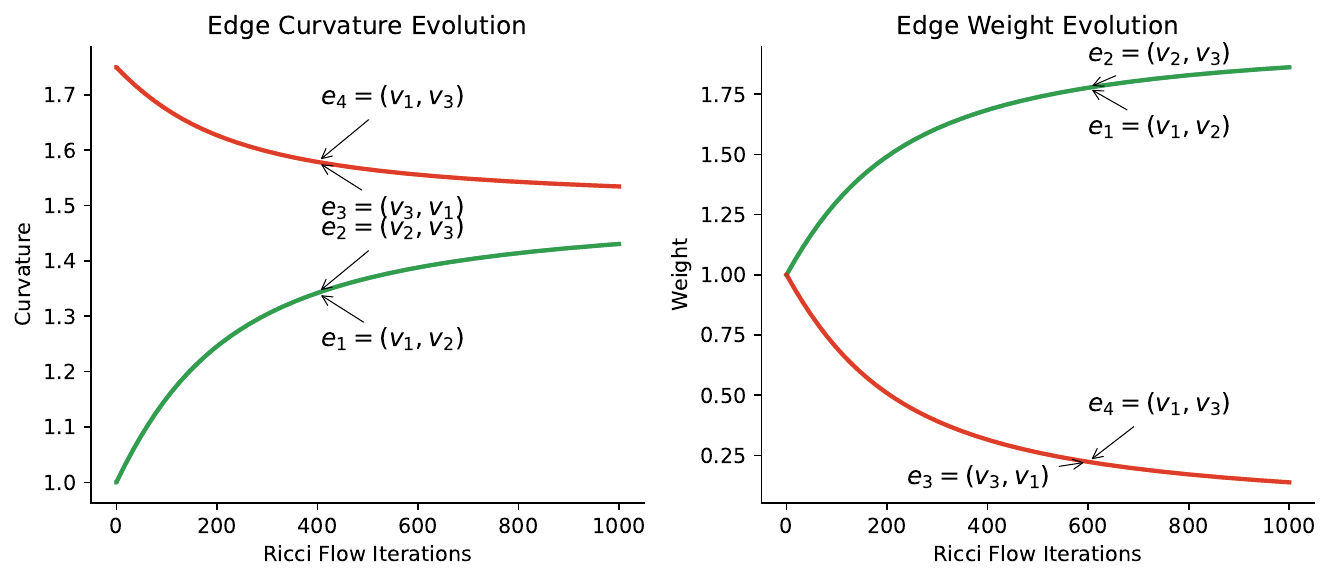}};
      \end{scope}

      \draw[-, dashed, Gray] (-6.5, -2) -- (9,-2);
      \begin{scope}[xshift=-6cm, yshift=-6cm]
          \node[vertex] (v1) at (0,0) {$v_1$};
          \node[vertex] (v2) at (2 * \scale,3.4 * \scale) {$v_2$};
          \node[vertex] (v3) at (4 * \scale,0) {$v_3$};
          \draw[->,bend left, OliveGreen] (v1) to node[midway,left] {$w_{e_1}\uparrow$} (v2);
          \draw[->,bend left, OliveGreen] (v2) to node[midway,right] {$w_{e_2}\uparrow$} (v3);
          \draw[->,bend left, BrickRed] (v3) to node[midway,below] {$w_{e_3}\downarrow$} (v1);
          \draw[->,bend left, BrickRed] (v1) to node[midway,above] {$w_{e_4}\downarrow$} (v3);
          \node at (0,-1.2 * \scale) {$\beta(v_1)=\frac{2}{3}$};
          \node at (2 * \scale,4.2 * \scale) {$\beta(v_2)=\frac{1}{2}$};
          \node at (3.8 * \scale,-1.2 * \scale) {$\beta(v_3)=\frac{1}{3}$};
      \end{scope}
    
      \begin{scope}[xshift=2.5cm, yshift=-6cm]
        \node[inner sep=0pt] (russell) at (1,1){\includegraphics[width=0.7\textwidth]{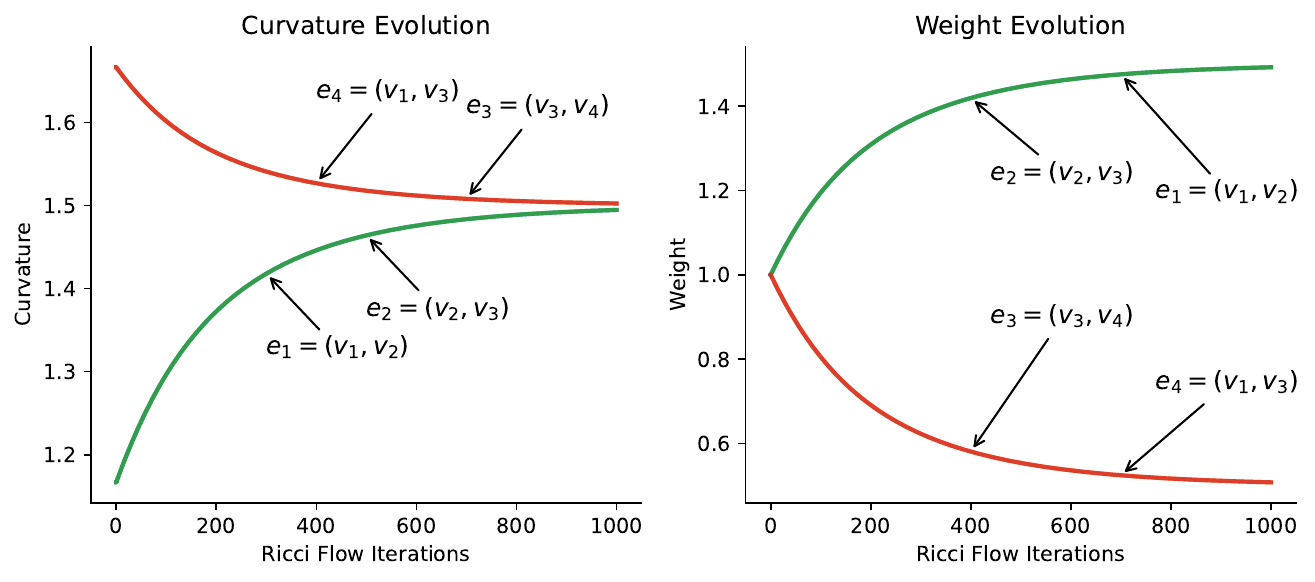}};
      \end{scope}

      \draw[-, dashed, Gray] (-6.5, -8) -- (9,-8);
      \begin{scope}[xshift=-6cm, yshift=-12cm]
          \node[vertex] (v1) at (0,0) {$v_1$};
          \node[vertex] (v2) at (2 * \scale,3.4 * \scale) {$v_2$};
          \node[vertex] (v3) at (4 * \scale,0) {$v_3$};
          \draw[->,bend left, RoyalBlue] (v1) to node[midway,left] {$w_{e_1}\uparrow$} (v2);
          \draw[->,bend left, Green] (v2) to node[midway,right] {$w_{e_2}\downarrow$} (v3);
          \draw[->,bend left, Maroon] (v3) to node[midway,below] {$w_{e_3}\downarrow$} (v1);
          \draw[->,bend left, Orange] (v1) to node[midway,above] {$w_{e_4}\downarrow$} (v3);
          \node at (0,-1.2 * \scale) {$\beta(v_1)=0.2$};
          \node at (2 * \scale,4.2 * \scale) {$\beta(v_2)=0.2$};
          \node at (3.8 * \scale,-1.2 * \scale) {$\beta(v_3)=0.2$};
      \end{scope}
    
      \begin{scope}[xshift=3cm, yshift=-12cm]
        \node[inner sep=0pt] (russell) at (1,1){\includegraphics[width=0.7\textwidth]{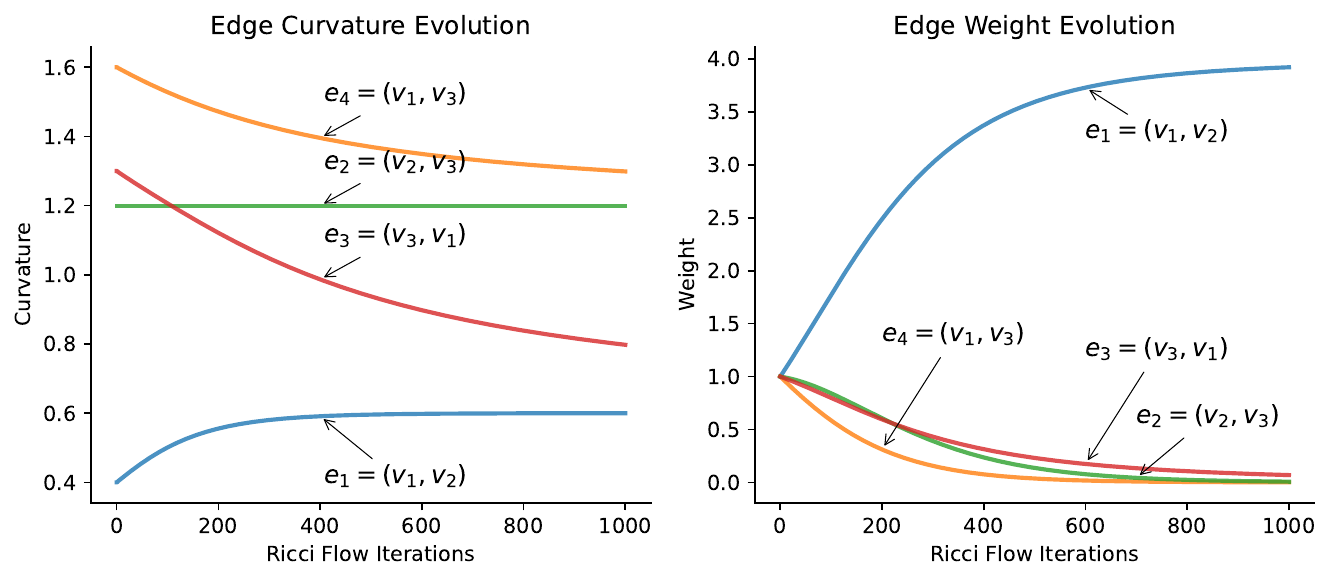}};
      \end{scope}

      \draw[-, dashed, Gray] (-6.5, -14) -- (9,-14);
      \begin{scope}[xshift=-6cm, yshift=-18cm]
          \node[vertex] (v1) at (0,0) {$v_1$};
          \node[vertex] (v2) at (2 * \scale,3.4 * \scale) {$v_2$};
          \node[vertex] (v3) at (4 * \scale,0) {$v_3$};
          \draw[->,bend left, RoyalBlue] (v1) to node[midway,left] {$w_{e_1}\downarrow$} (v2);
          \draw[->,bend left, Green] (v2) to node[midway,right] {$w_{e_2}\uparrow$} (v3);
          \draw[->,bend left, Maroon] (v3) to node[midway,below] {$w_{e_3}\downarrow$} (v1);
          \draw[->,bend left, Orange] (v1) to node[midway,above] {$w_{e_4}\downarrow$} (v3);
          \node at (0,-1.2 * \scale) {$\beta(v_1)=0.8$};
          \node at (2 * \scale,4.2 * \scale) {$\beta(v_2)=0.8$};
          \node at (3.8 * \scale,-1.2 * \scale) {$\beta(v_3)=0.8$};
      \end{scope}
    
      \begin{scope}[xshift=3cm, yshift=-18cm]
        \node[inner sep=0pt] (russell) at (1,1){\includegraphics[width=0.7\textwidth]{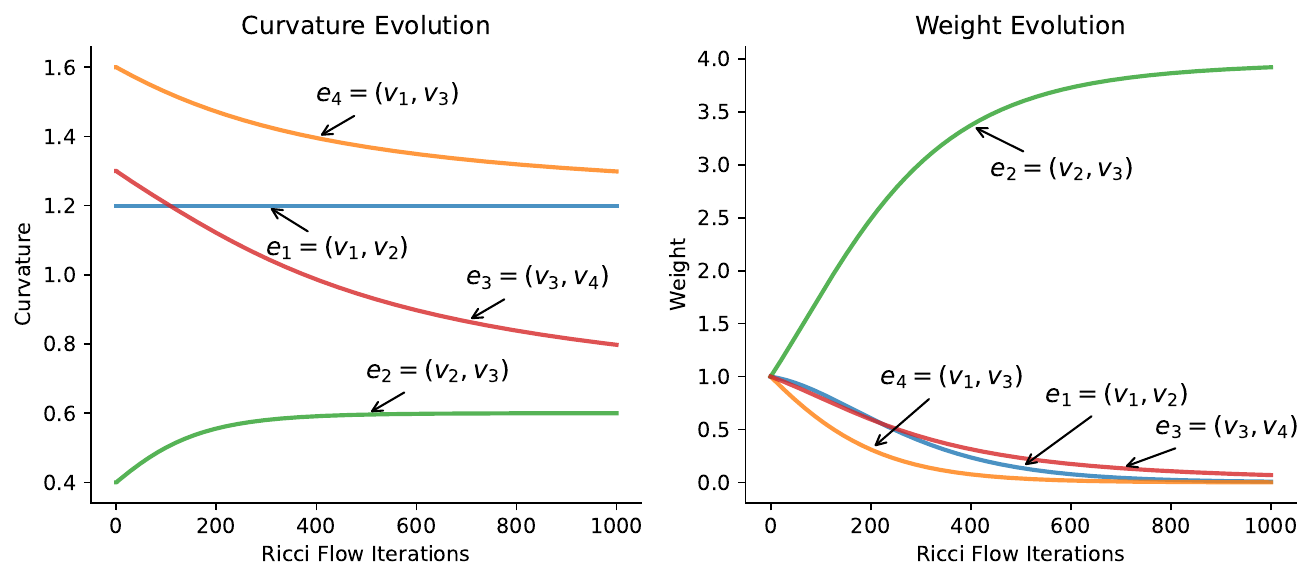}};
      \end{scope}
    \end{tikzpicture}
    \end{center}
    \caption{Asymmetric 3-cycle with extra edge. By changing the choice of $\beta(v_i), i\in\{1,2,3\}$, 
    the evolution of edge weights and curvatures are different.
    Each row corresponds to one choice of $\beta$.}
    \label{fig:3cycle4edge_evolution}
\end{figure}

\textbf{Case 1: $\bm{\beta = 0.5 }$.}
    All nodes treat out-neighborhood and in-neighborhood equally. 
    When $t=0$, the Wasserstein distances are given by
    \begin{align*}
    W_1(\mu_{v_1}^\alpha,\mu_{v_2}^\alpha) &
    = [\alpha - \tfrac{1}{2}(1-\alpha)] d(v_1, v_2) + \tfrac{1}{4} (1 - \alpha) d(v_3, v_2) 
    = \alpha,\\
    W_1(\mu_{v_2}^\alpha,\mu_{v_3}^\alpha) &
    = \tfrac{1}{4} (1 - \alpha)d(v_2, v_1) + [\alpha - \tfrac{1}{2}(1-\alpha)]d(v_2, v_3)
    = \alpha ,\\
    W_1(\mu_{v_3}^\alpha,\mu_{v_1}^\alpha) &
    = [\alpha - \tfrac{3}{4}(1-\alpha)]d(v_ 3, v_1) 
    = \alpha - \tfrac{3}{4}(1-\alpha),\\
    W_1(\mu_{v_1}^\alpha,\mu_{v_3}^\alpha) &
    = [\alpha - \tfrac{3}{4}(1-\alpha)]d(v_ 1, v_3) 
    = \alpha - \tfrac{3}{4}(1-\alpha).
    \end{align*}
    leading to
    $$\kappa_\alpha(v_1,v_2) = 1-\alpha,\quad
    \kappa_\alpha(v_2,v_3) = 1-\alpha,\quad
    \kappa_\alpha(v_3,v_1) = \tfrac{7}{4}(1-\alpha),\quad
    \kappa_\alpha(v_1, v_3) = \tfrac{7}{4}(1-\alpha).$$
    Thus, the Lin-Lu-Yau curvatures are
    $$\kappa(v_1,v_2) = \kappa(v_2, v_3) = 1,\quad
    \kappa(v_3,v_1) = \kappa(v_1,v_3) = \tfrac{7}{4},$$
    and the average curvature is $\overline{\kappa}_\sigma = \tfrac{11}{8}$.
    According to the Ricci flow equation, we have for each edge $e_i\in\gE$:
    $$w_e(t) = w_e^0 \exp\big(-t(\kappa_e - \overline{\kappa}_\sigma)\big).$$
    Specifically, as $t$ increases from $t=0$, $w_{e_1}$ and $w_{e_2}$ increase simultaneously, while $w_{e_3}$ and $w_{e_4}$ decrease simultaneously. 
    See~\figref{fig:nonsymmetric_triangle}(First row), note that the nonsymmetric essence of the graph is not reflected in this case, as the flow pattern is symmetric, 
    i.e., $w_{e_1}$ and $w_{e_2}$ behave exactly the same, and so do $w_{e_3}$ and $w_{e_4}$.

\textbf{Case 2: $\beta(x) = \frac{d^{out}_x}{d_x} $.}
  Each node treats out-neighbors and in-neighbors proportionally to their degrees, i.e., every edge, regardless of direction, is treated equally.
  Specifically, $\beta(v_1)=\tfrac{2}{3},\; \beta(v_2)=\tfrac{1}{2},\; \beta(v_3)=\tfrac{1}{3}$.
  At $t=0$, the Wasserstein distances are:
    \begin{align*}
    W_1(\mu_{v_1}^\alpha,\mu_{v_2}^\alpha) &
    = [\alpha - \tfrac{1}{2}(1-\alpha)] d(v_1, v_2) + \tfrac{1}{6} (1 - \alpha) d(v_3, v_2) 
    = \alpha - \tfrac{1}{6}(1-\alpha),\\
    W_1(\mu_{v_2}^\alpha,\mu_{v_3}^\alpha) &
    = \tfrac{1}{6} (1 - \alpha)d(v_2, v_1) + [\alpha - \tfrac{1}{2}(1-\alpha)]d(v_2, v_3)
    = \alpha - \tfrac{1}{6}(1-\alpha) ,\\
    W_1(\mu_{v_3}^\alpha,\mu_{v_1}^\alpha) &
    = [\alpha - \tfrac{2}{3}(1-\alpha)]d(v_ 3, v_1) 
    = \alpha - \tfrac{2}{3}(1-\alpha),\\
    W_1(\mu_{v_1}^\alpha,\mu_{v_3}^\alpha) &
    = [\alpha - \tfrac{2}{3}(1-\alpha)]d(v_ 1, v_3) 
    = \alpha - \tfrac{2}{3}(1-\alpha).
    \end{align*}
    leading to
    $$\kappa_\alpha(v_1,v_2) = \kappa_\alpha(v_2,v_3) = 1 - \alpha + \tfrac{1}{6}(1-\alpha),\quad
    \kappa_\alpha(v_3,v_1) = \kappa_\alpha(v_1, v_3) = 1 - \alpha + \tfrac{2}{3}(1-\alpha).$$
    Thus, the Lin-Lu-Yau curvatures are
    $$\kappa(v_1,v_2) = \kappa(v_2, v_3) = \tfrac{7}{6},\quad
    \kappa(v_3,v_1) = \kappa(v_1,v_3) = \tfrac{5}{3},$$
    and the average curvature is $\overline{\kappa}_\sigma = \tfrac{17}{3}$.

    According to the Ricci flow equation, similar to Case 1, $w_{e_1}$ and $w_{e_2}$ increase simultaneously, while $w_{e_3}$ and $w_{e_4}$ decrease simultaneously.
    See~\figref{fig:nonsymmetric_triangle}(Second row), the nonsymmetric essence of the graph is still not reflected in this case, as the flow pattern is still symmetric.
    
    Compare to Case 1, by observing the evolution of weights, we find that curvatures and weights in Case 2 converge faster than those in Case 1, i.e., they seem to reach a steady state in a shorter time.
    
\textbf{Case 3: $\beta < \tfrac{1}{2}$.}
    With $\beta$ smaller than 0.5, nodes favor inflow-based transport. 
    Setting $\beta(v_1)=\beta(v_2)=\beta(v_3)=0.2$, at $t=0$, the Wasserstein distances are:
    \begin{align*}
    W_1(\mu_{v_1}^\alpha,\mu_{v_2}^\alpha) &
    = [\alpha - \tfrac{4}{5}(1-\alpha)] d(v_1, v_2) + \tfrac{7}{10} (1 - \alpha) d(v_3, v_2) 
    = \alpha + \tfrac{3}{5}(1-\alpha),\\
    W_1(\mu_{v_2}^\alpha,\mu_{v_3}^\alpha) &
    = \tfrac{1}{5} (1 - \alpha) d(v_1, v_3) + [\alpha - \tfrac{4}{5}(1-\alpha)]d(v_2, v_3)
    = \alpha + \tfrac{1}{5}(1-\alpha) ,\\
    W_1(\mu_{v_3}^\alpha,\mu_{v_1}^\alpha) &
    = \tfrac{3}{10}(1-\alpha)d(v_2, v_1) + [\alpha - \tfrac{9}{10}(1-\alpha)]d(v_ 3, v_1)
    = \alpha - \tfrac{3}{10}(1-\alpha),\\
    W_1(\mu_{v_1}^\alpha,\mu_{v_3}^\alpha) &
    = \tfrac{3}{10}(1-\alpha)d(v_1, v_2) + [\alpha - \tfrac{9}{10}(1-\alpha)]d(v_ 1, v_3)
    = \alpha - \tfrac{3}{5}(1-\alpha).
    \end{align*}
    leading to
    $$\kappa_\alpha(v_1,v_2) = \tfrac{2}{5}(1-\alpha),\quad
    \kappa_\alpha(v_2,v_3) = \tfrac{6}{5}(1-\alpha),\quad
    \kappa_\alpha(v_3,v_1) = \tfrac{13}{10}(1-\alpha),\quad
    \kappa_\alpha(v_1, v_3) = \tfrac{8}{5}(1-\alpha).$$
    Thus, the Lin-Lu-Yau curvatures are
    $$\kappa(v_1,v_2) = \tfrac{2}{5},\quad
    \kappa(v_2, v_3) = \tfrac{6}{5},\quad
    \kappa(v_3,v_1) = \tfrac{13}{10},\quad
    \kappa(v_1,v_3) = \tfrac{8}{5},$$
    and the average curvature is $\overline{\kappa}_\sigma = \tfrac{9}{8}$. Only $\kappa(v_1,v_2)$ is smaller than $\overline{\kappa}_\sigma$.
    According to the Ricci flow equation, as $t$ increases from $t=0$, $w_{e_1}$ increases, while $w_{e_2}$, $w_{e_3}$ and $w_{e_4}$ decrease separately.
    
    See~\figref{fig:nonsymmetric_triangle}(Third row), the nonsymmetric essence of the graph is now reflected in this case, as the evolution of each edge has its own pattern.
    This indicates that the balancing factor $\beta$ can effectively model and detect the asymmetry in the graph and influence the redistribution of edge weights according to local node natures.

\textbf{Case 4: $\beta > \tfrac{1}{2}$.}
    Conversely, when $\beta$ exceeds 0.5, nodes prioritize outflow-based transport. 
    Setting $\beta(v_1)=\beta(v_2)=\beta(v_3)=0.8$, at $t=0$, the Wasserstein distances are:
    \begin{align*}
    W_1(\mu_{v_1}^\alpha,\mu_{v_2}^\alpha) &
    = [\alpha - \tfrac{2}{5}(1-\alpha)] d(v_1, v_2) + \tfrac{1}{5} (1 - \alpha) d(v_1, v_3) 
    = \alpha - \tfrac{1}{5}(1-\alpha),\\
    W_1(\mu_{v_2}^\alpha,\mu_{v_3}^\alpha) &
    = \tfrac{7}{10} (1 - \alpha) d(v_2, v_1) + [\alpha - \tfrac{4}{5}(1-\alpha)]d(v_2, v_3)
    = \alpha - \tfrac{3}{5}(1-\alpha) ,\\
    W_1(\mu_{v_3}^\alpha,\mu_{v_1}^\alpha) &
    = [\alpha - \tfrac{9}{10}(1-\alpha)]d(v_3, v_1) + \tfrac{3}{10}(1-\alpha)d(v_3, v_2)
    = \alpha - \tfrac{3}{10}(1-\alpha),\\
    W_1(\mu_{v_1}^\alpha,\mu_{v_3}^\alpha) &
    = [\alpha - \tfrac{9}{10}(1-\alpha)]d(v_1, v_3) + \tfrac{3}{10}(1-\alpha)d(v_2, v_3)
    = \alpha - \tfrac{3}{5}(1-\alpha).
    \end{align*}
    leading to
    $$\kappa_\alpha(v_1,v_2) = \tfrac{6}{5}(1-\alpha),\quad
    \kappa_\alpha(v_2,v_3) = \tfrac{2}{5}(1-\alpha),\quad
    \kappa_\alpha(v_3,v_1) = \tfrac{13}{10}(1-\alpha),\quad
    \kappa_\alpha(v_1, v_3) = \tfrac{8}{5}(1-\alpha).$$
    Thus, the Lin-Lu-Yau curvatures are
    $$\kappa(v_1,v_2) = \tfrac{6}{5},\quad
    \kappa(v_2, v_3) = \tfrac{2}{5},\quad
    \kappa(v_3,v_1) = \tfrac{13}{10},\quad
    \kappa(v_1,v_3) = \tfrac{8}{5},$$
    and the average curvature is $\overline{\kappa}_\sigma = \tfrac{9}{8}$. Only $\kappa(v_2,v_3)$ is smaller than $\overline{\kappa}_\sigma$.
    
    According to the Ricci flow equation, as $t$ increases from $t=0$, $w_{e_2}$ increases, while $w_{e_1}$, $w_{e_3}$ and $w_{e_4}$ decrease individually.
    
    As shown in~\figref{fig:nonsymmetric_triangle}(Fourth row), the nonsymmetric essence of the graph is also reflected in this case, as the evolution of each edge has its own pattern.
    
    Compare to Case 3, by observing the third and fourth rows of~\figref{fig:nonsymmetric_triangle}, 
    we find that the evolution patterns of $w_{e_2}$ and $w_{e_1}$ are opposite, 
    while the evolution patterns of $w_{e_3}$ and $w_{e_4}$ are the same,
    which aligns with the fact that $\beta$ values are complementary (i.e., $0.2 + 0.8 = 1$) and the subgraph formed by the edges $e_3$ and $e_4$ is symmetric.
\end{example}
\begin{remark}
The above example illustrates how the balancing factor $\beta$ influences the Ricci curvature and flow on a directed graph, highlighting its role in capturing the asymmetry of directed edges.
By adjusting $\beta$, we can model different transport preferences at each node, leading to distinct models of flow evolution.
We only presented four specific cases for clarity, but in practice, $\beta$ can be set to any value in $[0,1]$ for each node.
This flexibility is crucial for accurately representing and analyzing real-world directed networks, where nodes may have varying tendencies towards sending or receiving flow.
\end{remark}

Apart from the flexibility incorporated by the balancing factor $\beta$, another key feature of our proposed directed Ricci flow is that the weights evolving while the distances remain unchanged.
For application perspective, this is crucial in scenarios where the physical or geographical distances between nodes are fixed, but the weights (e.g., traffic loads, communication costs) can vary over time.
Other graph Ricci flow approaches that conflate edge weights with distances cannot capture this distinction, limiting their utility for real-world dataset modeling, while our method effectively addresses this challenge.

\section{Conclusion}\label{sec:conclusion}
We proposed a Ricci flow framework for \emph{weighted directed graphs}, proved the existence and uniqueness of its solution, introduced a node-wise balancing factor $\beta$ to capture directional asymmetry, and designed a distance-preserving flow that decouples geometry from dynamics. This approach better models real-world systems where physical distances remain fixed but flow conditions evolve.

Future research will proceed along both theoretical and practical directions. Theoretically, further investigation into the convergence behavior of the proposed Ricci flow and a deeper analysis of the role of the balancing factor $\beta$ in shaping network evolution are of particular interest. Practically, applying this framework to real-world networks, such as transportation systems, communication networks, and microbial interaction networks, etc., will uncover more geometric structural properties.

\section*{Acknowledgments}
Shuliang Bai is supported in part by NSFC grant number 12301434. 
Shuang Liu is supported by the National Nature Science Foundation of China (Grant No.12001536, 12371102).
Xin Lai is supported by the start-up research fund from Beijing Institute of Mathematical Sciences and Applications (BIMSA).

\printbibliography

\end{document}